\DeclareMathOperator{\poly}{poly}
\DeclareMathOperator{\orb}{orb}
\DeclareMathOperator{\Sing}{Sing}
\DeclareMathOperator{\ord}{ord}
\DeclareMathOperator{\GL}{GL}
\DeclareMathOperator{\Eh}{L}
\def\K{\kappa}
\def\C{\mathbb{C}}
\def\P{\mathbb{P}}
\def\qa{{\boldsymbol{{G}}}}
\def\cD{{D}}
\def\Diff{\Delta}
\def\Z{\mathbb{Z}}
\def\ZZ{\mathbb{Z}}
\def\CC{\mathcal{C}}
\def\Q{\mathbf{Q}}
\def\QQ{\mathbb{Q}}
\def\bQ{\mathbf{Q}}
\def\RR{\mathbb{R}}
\def\PP{\mathbb{P}}
\def\w{w}
\def\bw{\bar \omega}
\def\cO{\mathcal{O}}
\def\cC{\mathcal{C}}
\def\O{\mathcal{O}}
\def\cP{\mathcal{P}}
\def\red{{\rm red}}
\def\logres{{\rm LR}}
\newcommand\nul{\text{nul}}
\newtheorem{thm}{Theorem}[section]  %CHOOSE [chapter] or [section]
\newtheorem{prop}{Proposition}[section]
\newtheorem{cor}{Corollary}[section]
\newtheorem{lemma}{Lemma}[section]
\theoremstyle{remark}
\newtheorem{remark}{Remark}[section]
\newtheorem{notation}{Notation}[section]
\theoremstyle{definition}
\newtheorem{dfn}{Definition}[section]
\newtheorem{example}{Example}[section]
\def\leftblowup#1#2{\smash{\mathop{\longleftarrow}
\limits^{\array{ll} _{#1} \\ ^{#2} \endarray}}}
\let\c@lemma\c@thm
\let\c@prop\c@thm
\let\c@conj\c@thm
\let\c@cor\c@thm
\let\c@rem\c@thm
\let\c@dfn\c@thm
\let\c@notation\c@thm
\let\c@exam\c@thm
\def\makeautorefname#1#2{\expandafter\def\csname#1autorefname\endcsname{#2}}
\title
[Numerical Adjunction Formulas...]
{Numerical Adjunction Formulas for Weighted Projective Planes and Lattice Points Counting}
\author[J.I. Cogolludo]{Jos{\'e} Ignacio Cogolludo-Agust{\'i}n}
\address{Departamento de Matem\'aticas, IUMA\\ 
Universidad de Zaragoza\\ 
C.~Pedro Cerbuna 12\\ 
50009 Zaragoza, Spain} 
\email{jicogo@unizar.es} 
\author[J. Martín]{Jorge Martín-Morales}
\address{Centro Universitario de la Defensa-IUMA\\ 
Academia General
Militar\\ 
Ctra. de Huesca s/n.\\ 
50090, Zaragoza, Spain} 
\email{jorge@unizar.es,jortigas@unizar.es} 
\author[J. Ortigas]{Jorge Ortigas-Galindo}
\thanks{All authors are partially supported by
the Spanish Ministry of Education MTM2013-45710-C2-1-P and 
\emph{E15 Grupo Consolidado Geometr\'{\i}a} from the Gobierno de Aragón. 
The second author is also supported by FQM-333 from  
Junta de Andaluc{\'\i}a.
}
\keywords{Quotient surface singularity, invariants of curve singularities, Ehrhart polynomial, rational polytope}
\subjclass[2010]{32S05, 32S25, 52B20, 11F20}
\begin{document}

\begin{abstract}
This paper gives an explicit formula for the Ehrhart quasi-poly\-nomial of certain 2-dimensional polyhedra 
in terms of invariants of surface quotient singularities. Also, a formula for the dimension of the space of
quasi-homogeneous polynomials of a given degree is derived. This admits an interpretation as a 
Numerical Adjunction Formula for singular curves on the weighted projective plane.
\end{abstract}

\maketitle

\section{Introduction}
This paper deals with the general problem of counting lattice points in a polyhedron with 
rational vertices and its connection with both singularity theory of surfaces and Adjunction 
Formulas for curves in the weighted projective plane. 
In addition, we focus on rational polyhedra (whose vertices are rational points) as opposed to 
lattice polyhedra (whose vertices are integers).
Our approach exploits the connexion between Dedekind sums (as originated from the work of
Hirzebruch-Zagier~\cite{Hirzebruch-atiyah}) and the geometry of cyclic quotient singularities,
which has been proposed by several authors 
(see e.g.~\cite{pommersheim,Blache-RiemannRoch,Brion-points,laterveer,Urzua-arrangements,Buckley-icecream,
Ashikaga,Ashikaga-ishizaka}).

According to Ehrhart~\cite{Ehrhart-book}, the number of integer points of a lattice (resp.~rational) 
polygon $\cP$ and its dilations $d\cP=\{dp\mid p\in \cP\}$ is a polynomial (resp. quasi-polynomial) in $d$ 
of degree $\dim \cP$ referred to as the \emph{Ehrhart (quasi)-polynomial} of~$\cP$ (cf.~\cite{Chen-generalized}). 
In this paper we focus on the Ehrhart quasi-polynomial of polygons of type 
\begin{equation}\label{eq-dp}
d\cD_\w=\cD_{w,d}=\{(x,y,z)\in \RR^3\mid x,y,z\geq0, w_0x+w_1y+w_2z=d\},
\end{equation}
where $w=(w_0,w_1,w_2)$ are pairwise coprime and 
\begin{equation}\label{eq-dw}
\cD_\w:=\{(x,y,z)\in \RR^3\mid x,y,z\geq0, w_0x+w_1y+w_2z=1\}
\end{equation}
is a rational polygon.
In Theorem~\ref{AdjFor} we give an explicit formula for the Ehrhart quasi-polynomial 
of~\eqref{eq-dp}, which in Theorem~\ref{thmDELTAJ} is shown to be an invariant of the quotient 
singularities of the weighted projective plane $\PP^2_w$.

Throughout this paper $w_0,w_1,w_2$ are assumed to be pairwise coprime integers. 
Denote by $w=(w_0,w_1,w_2)$, $\bar{w}=w_0w_1w_2$, and
$|w|=w_0+w_1+w_2$. 
Finally, the key ingredients to connect the arithmetical problem referred to above with the 
geometry of weighted projective planes come from the observation that 
  \begin{equation}\label{cardinal}
\Eh_{\w}(d):= \# (\cD_{\w,d}\cap \ZZ^3)=h^0(\P^2_\w;\O(d)),
  \end{equation}
that is, the dimension of the vector space of weighted homogeneous polynomials of degree $d$,
and from a Numerical Adjunction Formula relating $h^0(\P^2_\w;\O(d))$ with the genus of a curve in~$\PP^2_\w$.

To explain what we mean by \emph{Numerical Adjunction Formulas}, assume a quasi-smooth curve 
$\CC\subset \PP^2_\w$ of degree $d$ exists. In that case, according to the classical Adjunction 
Formula one has the following equality relating canonical divisors on $\cC$ and $\PP_\w^2$
\begin{equation}\label{adclas}
K_{\CC}=(K_{\P^2_\w}+\CC)|_\CC.
\end{equation}
Equating degrees on both sides of~\eqref{adclas} and using the Weighted Bézout's Theorem, one has
$$2g(\CC)-2=\deg (K_{\P^2_\w}+\CC)|_\CC=\dfrac{\deg
(\CC)\deg(K_{\P^2_\w}+\CC)}{\bar{\w}}=\dfrac{d(d-|\w|)}{\bar{\w}}.$$

Notice that the generic curve of degree $k\bar w$ is smooth (see \cite[Lemma 5.4]{CMO12}). 
In that case, one has (c.f.~\S\ref{secex})
\begin{equation}\label{eq-adj-form}
h^0(\P^2_\w;\O(k\bar \w-|w|))=\Eh_{\w}(k\bar \w-|w|)=g_{\w,k\bar{\w}},
\end{equation}
where $g_{\w,t}:=\dfrac{t(t-|w|)}{2\bar{w}}+1$.

However, for a general $d$ the generic curve of degree $d$ in $\PP^2_\w$ is not necessarily 
quasi-smooth (see \cite[Lemma 5.4]{CMO12}).
The final goal of this paper is to revisit~\eqref{eq-adj-form} in the general (singular) case.

Let us present the main results of this work. The first main statement shows an explicit formula
for the Ehrhart quasi-polynomial $\Eh_\w(d)$ of degree two of $\cD_{\w,d}$ in terms of~$d$.

\begin{thm}\label{AdjFor}
The Ehrhart quasi-polynomial $\Eh_\w(d)$ for the polygon $\cD_\w$ 
in~\eqref{eq-dw} satisfies
$$\Eh_\w(d)=g_{\w,d+|\w|}-\sum_{P\in\Sing(\P^2_w)}\Diff_P(d+|\w|).$$
\end{thm}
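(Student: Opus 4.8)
The plan is to compute the Ehrhart quasi-polynomial $\Eh_\w(d)=\#(\cD_{\w,d}\cap\Z^3)$ directly by counting lattice points on the weighted triangle, and to isolate the ``defect'' from the expected quadratic term $g_{\w,d+|\w|}$ as a sum of local contributions at the three singular points of $\P^2_\w$. The starting point is the identity \eqref{cardinal}, $\Eh_\w(d)=h^0(\P^2_\w;\O(d))$, which re-expresses the count as the dimension of the space of weighted homogeneous polynomials of degree $d$ in variables of weights $w_0,w_1,w_2$. Since the $w_i$ are pairwise coprime, the monomial $x_0^{a_0}x_1^{a_1}x_2^{a_2}$ has degree $d$ exactly when $w_0a_0+w_1a_1+w_2a_2=d$, so one must count non-negative integer solutions of this linear Diophantine equation — a classical denumerant problem whose generating function is $\prod_{i=0}^2 (1-t^{w_i})^{-1}$.

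First I would perform a partial fraction decomposition of $\prod_i(1-t^{w_i})^{-1}$. Because the $w_i$ are pairwise coprime, the only repeated pole is at $t=1$ (of order $3$), contributing a quadratic polynomial in $d$; the simple poles at the nontrivial $w_i$-th roots of unity contribute periodic terms. Extracting the $t=1$ part and matching it with the elementary genus expression gives precisely $g_{\w,d+|\w|}$ up to the periodic corrections; one checks the leading coefficient is $1/(2\bar w)$ and the constant/linear terms agree with $\frac{t(t-|w|)}{2\bar w}+1$ evaluated at $t=d+|\w|$, using $\bar w=w_0w_1w_2$ and $|w|=w_0+w_1+w_2$. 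Next I would collect, for each $i$, the sum of residues at the primitive $w_i$-th roots of unity into a single Dedekind-sum-type expression; this is the step where one recognizes the quotient singularity $P_i\in\Sing(\P^2_\w)$ of order $w_i$ (with the appropriate pair of weights acting), and identifies the resulting periodic function of $d$ with $\Diff_{P_i}(d+|\w|)$ as defined earlier in the paper (where $\Diff_P$ packages the Dedekind-sum data attached to $P$). A Reciprocity-type argument (Rademacher reciprocity for Dedekind sums) together with the normalization built into the definition of $\Diff_P$ should make the bookkeeping close.

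The main obstacle I expect is the bookkeeping in the second step: correctly matching the sum of root-of-unity residues for each $i$ with the combinatorial/singularity invariant $\Diff_{P_i}$ as it was defined in the earlier sections, including getting the shift by $|\w|$ and the signs exactly right. In particular one must verify that the residue sum depends on $d$ only through $d\bmod w_i$ in the same way $\Diff_{P_i}$ does, and that the finitely many constant discrepancies (which are independent of $d$) vanish — this is typically where a Dedekind reciprocity identity or a direct evaluation at a convenient value of $d$ (e.g. $d=k\bar w-|\w|$, where \eqref{eq-adj-form} already pins down the answer to $g_{\w,k\bar w}$) is used to fix the remaining constant. Once the three local terms are in hand, summing them over $P\in\Sing(\P^2_\w)$ and subtracting from $g_{\w,d+|\w|}$ yields the claimed formula; alternatively, one could sidestep residue computations entirely by using a toric resolution of $\P^2_\w$ and Riemann--Roch, in which case the $\Diff_P$ appear as the local Riemann--Roch correction terms at the quotient singularities, but the generating-function route is more self-contained given what the excerpt sets up.
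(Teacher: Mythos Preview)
Your plan is essentially the paper's proof: the partial-fraction expansion of $\prod_i(1-t^{w_i})^{-1}$ is exactly the Beck--Robins formula $p_{\{w_0,w_1,w_2\}}(t)$ recalled in \S\ref{dedekind_sec}, and the paper proceeds by writing both $\Eh_\w(d-|\w|)$ and each $\Diff_{P_i}(d)$ in terms of Fourier--Dedekind sums $s_n(a,b;c)$, then matching them via Rademacher reciprocity and an explicit change of summation variable in the root-of-unity sums. The step you flag as the obstacle is indeed where all the work lies, and it is slightly more than bookkeeping: since $\Diff_P$ is defined combinatorially as $A^{(p,q)}_r-\delta^{(p,q)}_r$ (a 2D lattice count minus a quadratic, not a priori a Dedekind sum), one must first express $A^{(p,q)}_r$ via the 2D denumerant $p_{\{p,q,1\}}$ and then identify the resulting sum $s_{p+q-qr}(q,1;p)$ with the 3D residue term $-s_{|\w|-d}(w_{i+1},w_{i+2};w_i)$ by substituting $q_i\equiv -w_{i+1}^{-1}w_{i+2}$ and $r_i\equiv w_{i+2}^{-1}d \bmod w_i$.
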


The quadratic polynomial $g_{\w,k}=\dfrac{k(k-|w|)}{2\bar{w}}+1$ in $k$ is called 
the \emph{virtual genus} (see \cite[Definition 5.1]{CMO12}) and $\Diff_P(k)$
is a periodic function of period $\bar \w$ which is an invariant associated to the singularity 
$P\in \Sing(\P^2_w)$ (see Definition~\ref{defDELTA}).

The proof of Theorem~\ref{AdjFor} relies heavily on computations with Dedekind sums.

The next result aims to show that the previous combinatorial number $\Diff_P(k)$ has a geometric
interpretation and can be computed via invariants of curve singularities on a singular surface. 
In order to do so we recall the recently defined invariant $\delta_P(f)$ of a curve $(\{f=0\},P)$ on a 
surface with quotient singularity (see \cite[Section 4.2]{CMO12})
and we define a new invariant $\K_P(f)$ in \S\ref{secccondiciones}.

\begin{thm}\label{thmDELTAJ}
Let $(f,P)$ be a reduced curve germ at $P\in X$ a surface cyclic quotient singularity.
Then 
$$\Diff_P(k)= \delta_P(f)-\K_P(f)$$
for any reduced germ $f\in \O_{X,P}(k)$.
\end{thm}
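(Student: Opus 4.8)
The plan is to prove the identity $\Diff_P(k) = \delta_P(f) - \K_P(f)$ by reducing it to a local statement at a single cyclic quotient singularity and then to a comparison of two explicit combinatorial quantities. First I would fix a local model: a cyclic quotient singularity $P \in X$ of type $\frac{1}{n}(1,q)$ with $\gcd(n,q)=1$, realized as $\mathbb{C}^2/\mu_n$, and describe $\O_{X,P}(k)$ concretely as the module of semi-invariant germs of weight $k$ under the $\mu_n$-action. The invariant $\delta_P(f)$ has already been defined in \cite[Section 4.2]{CMO12} via a resolution/normalization of the germ $(\{f=0\},P)$, and $\K_P(f)$ is the new invariant from \S\ref{secccondiciones}, which should encode the ``conditions'' imposed by passing through $P$ on weighted-homogeneous forms of degree $k$ (a local Cayley--Bacharach / adjoint-type count). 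So the first real step is to write both $\delta_P(f)$ and $\K_P(f)$ as lattice-point counts in the fundamental domain of the $\mu_n$-action, using the standard dictionary between cyclic quotient singularities and cones.

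Next I would establish that the right-hand side $\delta_P(f) - \K_P(f)$ depends only on $k \bmod \bar w$ (equivalently, only on the local data $n$, $q$, and $k \bmod n$) and not on the particular reduced germ $f \in \O_{X,P}(k)$. This independence statement is essential because $\Diff_P(k)$ is, by Definition~\ref{defDELTA}, a purely combinatorial periodic function of $k$ with period $\bar w$; so both invariants separately may depend on $f$ but their difference cannot. I expect this to follow from a deformation argument: connect any two reduced germs of the same degree through a family of reduced germs (using that the generic member of $\O_{X,P}(k)$ is reduced and that $\delta$ and $\K$ are both upper/lower semicontinuous in the right directions, with their difference being a ``Milnor-number-minus-conditions'' type quantity that is a topological invariant of the pair). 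Once this is in hand, it suffices to compute both sides for one convenient choice of $f$ at each singularity — e.g.\ a monomial curve or a generic line-type germ through $P$ — reducing the theorem to a finite, explicit verification.

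The computational heart of the proof is then to match this explicit expression with $\Diff_P(k)$. Here I would invoke Theorem~\ref{AdjFor}: globally, $\Eh_\w(d) = g_{\w,d+|\w|} - \sum_P \Diff_P(d+|\w|)$, while the Numerical Adjunction Formula for a quasi-smooth curve together with the genus formula \eqref{eq-adj-form} expresses $h^0(\P^2_\w;\O(d))$ in terms of the virtual genus and a sum of local correction terms of the form $\delta_P - \K_P$ coming from \eqref{adclas} applied with singularities. Comparing the two expansions of the same quantity $\Eh_\w(d) = h^0(\P^2_\w;\O(d))$ and using a ``separation of variables'' argument — pick weight systems $w$ for which only one singularity of a prescribed type $\frac{1}{n}(1,q)$ occurs, or vary $w$ to isolate the contribution of each $P$ — forces the local equality $\Diff_P(k) = \delta_P(f) - \K_P(f)$ term by term. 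The technical device making this isolation legitimate is that every cyclic quotient singularity type arises on some $\PP^2_\w$ with pairwise coprime weights, so the local identities are pinned down by infinitely many global ones.

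The main obstacle I anticipate is the $f$-independence step in the second paragraph: showing that $\delta_P(f) - \K_P(f)$ is constant on the (connected, but possibly singular) space of reduced germs of a fixed degree $k$ in $\O_{X,P}(k)$. Both $\delta_P$ and $\K_P$ jump under degeneration, and controlling their difference requires either a clean semicontinuity statement paired with a dimension count of the ``conditions'' space $\K_P$ imposes, or an interpretation of $\delta_P - \K_P$ as an Euler-characteristic-type invariant that is manifestly locally constant in flat families of reduced curves. If a direct deformation argument proves delicate, the fallback is to bypass it: compute $\K_P(f)$ explicitly from its definition in \S\ref{secccondiciones} as a count of lattice points, compute $\delta_P(f)$ from the known resolution data at a cyclic quotient singularity (again a lattice-point count via the Hirzebruch--Jung continued fraction of $n/q$), and verify the resulting closed formula equals $\Diff_P(k)$ by a direct Dedekind-sum manipulation parallel to the proof of Theorem~\ref{AdjFor} — this makes the proof self-contained at the cost of more bookkeeping with Dedekind sums.
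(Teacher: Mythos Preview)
Your proposal misses the paper's key idea and, in its main thrust, is circular. The paper's proof is a two-line corollary of equation~\eqref{etiqueta1}, established inside the proof of Theorem~\ref{teoremadeltaext}. The trick there is purely local and does not touch Theorem~\ref{AdjFor} or any global formula at all: given $f\in\O_P(k)$ on $X(p;-1,q)$, Lemma~\ref{lema1b} produces a germ $g$ on the smooth $\C^2$ whose strict transform under a single $(p,q)$-blow-up is $f$. On $\C^2$ one already knows $\K_0(g)=\delta_0(g)$ (Remark~\ref{condicionesclasico}). The pull-back computation~\eqref{eq-blowup-w} shows $\K_0(g)=A^{(p,q)}_r+\K_0(f)$, while Theorem~\ref{thm-delta} gives $\delta_0(g)=\delta^{(p,q)}_r+\delta_0(f)$. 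Subtracting yields $\delta_0(f)-\K_0(f)=A^{(p,q)}_r-\delta^{(p,q)}_r=\Diff_P(k)$ immediately, with $f$-independence as a free consequence rather than a prerequisite.

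Your global-to-local strategy in the third paragraph has a genuine gap: the only available identity expressing $\Eh_\w(d)$ in terms of $\delta_P-\K_P$ is Theorem~\ref{Adj-like-form}, and the paper proves that theorem \emph{using} equation~\eqref{etiqueta1}, i.e.\ using Theorem~\ref{thmDELTAJ} itself. So comparing Theorem~\ref{AdjFor} with the Numerical Adjunction Formula to extract the local identity is circular unless you first supply an independent, direct proof that $h^0(\PP^2_\w,\O(d-|w|))=g(\CC)+\sum_P\K_P(f)$, which you do not sketch. Your deformation argument for $f$-independence is likewise unnecessary and, as you yourself note, delicate; the paper never needs it. Your fallback---compute both sides explicitly via Hirzebruch--Jung data and Dedekind sums---could in principle work, but it is far more laborious than the paper's one-blow-up reduction to the smooth case.
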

The module $\cO_{X,P}(k)$ of $k$-invariant germs of $X$ at $P$ can be found in Definition~\ref{deforbi}.

As an immediate consequence of Theorems~\ref{AdjFor} and \ref{thmDELTAJ} one has a method to compute 
$\Eh_\w(d)$ by means of appropriate curve germs $(\{f=0\},P)$ on surface quotient singularities.
In an upcoming paper we will study the $\Diff_P(k)$-invariant by means of singularity theory and 
intersection theory on surface quotient singularities in order to give a closed effective formula 
for the Ehrhart quasi-polynomial $\Eh_\w(d)$.
In fact, Theorem~\ref{AdjFor} can also be seen as a version of~\cite{Blache-RiemannRoch}, where an 
explicit interpretation of the \emph{correction term} is given in Theorem~\ref{thmDELTAJ}.

Finally, we generalize the Numerical Adjunction Formula for a general singular curve $\cC$ on $\PP^2_\w$
relating $h^0(\PP^2,\cO_{\PP^2_\w}(d-|w|))$, its genus $g(\cC)$, and the newly defined invariant $\K_P$.

\begin{thm}[Numerical Adjunction Formula]\label{Adj-like-form}
Consider $\cC=\{f=0\}\subset \PP^2_{w}$ an irreducible curve of degree $d$, then
$$h^0(\PP^2_\w,\cO_{\PP^2_\w}(d-|w|))=g(\CC)+\sum _{P\in\Sing (\cC)} \K_P(f).$$
\end{thm}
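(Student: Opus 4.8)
The plan is to combine the two previous main theorems with a suitable curve germ at each singular point. Concretely, let $\cC=\{f=0\}\subset\PP^2_\w$ be irreducible of degree $d$, set $k=d$, and apply Theorem~\ref{AdjFor} with the substitution $d\mapsto d-|\w|$, so that $d-|\w|+|\w|=d$ and
\begin{equation}\label{eq-step1}
\Eh_\w(d-|\w|)=g_{\w,d}-\sum_{P\in\Sing(\PP^2_\w)}\Diff_P(d).
\end{equation}
By~\eqref{cardinal} the left-hand side equals $h^0(\PP^2_\w;\cO_{\PP^2_\w}(d-|\w|))$, which is the quantity we wish to compute; and $g_{\w,d}=\frac{d(d-|\w|)}{2\bar\w}+1$ is the virtual genus.

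The second step is to interpret each $\Diff_P(d)$ geometrically via Theorem~\ref{thmDELTAJ}. For every $P\in\Sing(\PP^2_\w)$ one has an analytic isomorphism of a neighbourhood of $P$ with a cyclic quotient singularity $X$, and the germ of $f$ at $P$ gives a reduced element of $\cO_{X,P}(d)$ (here I use that $\cC$ is reduced, being irreducible, and that the germ lies in the $d$-invariant module since $\cC$ has degree $d$). Thus Theorem~\ref{thmDELTAJ} applies and gives $\Diff_P(d)=\delta_P(f)-\K_P(f)$ at each singular point of $\PP^2_\w$. Substituting into~\eqref{eq-step1},
\begin{equation}\label{eq-step2}
h^0(\PP^2_\w;\cO_{\PP^2_\w}(d-|\w|))=g_{\w,d}+\sum_{P\in\Sing(\PP^2_\w)}\bigl(\K_P(f)-\delta_P(f)\bigr).
\end{equation}

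The third and final step is to reorganize the right-hand side using the relation between the virtual genus and the actual genus of $\cC$. The global $\delta$-invariant of $\cC$ — summed over all of its singular points, whether or not they sit on $\Sing(\PP^2_\w)$ — controls the drop from the arithmetic/virtual genus to $g(\cC)$; this is precisely the content of the genus formula on the weighted projective plane (the singular Noether formula, cf.~\cite{CMO12}), which yields $g(\cC)=g_{\w,d}-\sum_{P\in\Sing(\cC)}\delta_P(f)$. One must check the bookkeeping carefully: the sum in~\eqref{eq-step2} runs over $\Sing(\PP^2_\w)$ while the genus correction runs over $\Sing(\cC)$; at a smooth point of $\PP^2_\w$ the invariant $\K_P(f)$ is defined (it reduces to the classical $\delta$ of a plane curve germ, so $\K_P(f)=\delta_P(f)$ there by the definition in \S\ref{secccondiciones}) and at a point of $\Sing(\PP^2_\w)\setminus\Sing(\cC)$ both $\delta_P$ and $\K_P$ vanish since $f$ is a unit. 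Hence $\sum_{P\in\Sing(\PP^2_\w)}\delta_P(f)$ can be replaced by $\sum_{P\in\Sing(\cC)}\delta_P(f)-\sum_{P\in\Sing(\cC)\setminus\Sing(\PP^2_\w)}\K_P(f)$, and combining this with the genus formula collapses~\eqref{eq-step2} to
$$h^0(\PP^2_\w;\cO_{\PP^2_\w}(d-|\w|))=g(\cC)+\sum_{P\in\Sing(\cC)}\K_P(f),$$
as claimed.

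The main obstacle is the last bookkeeping step: one needs the precise definition of $\K_P(f)$ at both smooth and singular points of $\PP^2_\w$ from \S\ref{secccondiciones} to confirm that $\K_P(f)=\delta_P(f)$ at an ordinary (smooth-surface) singular point of $\cC$ and that all terms vanish away from $\Sing(\cC)$, so that the two index sets can be reconciled. A secondary point requiring care is the validity of the genus formula $g(\cC)=g_{\w,d}-\sum_P\delta_P(f)$ for an arbitrary irreducible — not necessarily quasi-smooth — curve on $\PP^2_\w$; this is where one invokes the machinery of \cite{CMO12} for $\delta$-invariants on quotient-singular surfaces rather than the naïve adjunction of~\eqref{adclas}.
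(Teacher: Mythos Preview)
Your proposal is correct and follows essentially the same route as the paper: apply Theorem~\ref{AdjFor} (shifted by $|\w|$), invoke Theorem~\ref{thmDELTAJ} (equivalently equation~\eqref{etiqueta1}) at each $P\in\Sing(\PP^2_\w)$, and combine with the genus formula $g(\cC)=g_{\w,d}-\sum_{P\in\Sing(\cC)}\delta_P(f)$ from \cite[Theorem~5.7]{CMO12}, using $\K_P=\delta_P$ at smooth surface points (Remark~\ref{condicionesclasico}) to reconcile the two index sets. The paper packages the bookkeeping as a single add-and-subtract of $\sum_i(\delta_{r_i}^{(w_i,q_i)}-A_{r_i}^{(w_i,q_i)})$, while you spell out the case analysis on $\Sing(\PP^2_\w)$ versus $\Sing(\cC)$, but the content is the same.
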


This paper is organized as follows. In \S\ref{secInt} some basic definitions
and preliminary results on surface quotient singularities, logarithmic forms, and Dedekind sums
are given. In \S\ref{secdelta}, after defining the three local invariants mentioned above, 
a proof of Theorem~\ref{thmDELTAJ} is given. An introductory example is treated in \S\ref{secex} and finally, 
the main results Theorem~\ref{AdjFor} and Theorem~\ref{Adj-like-form} are proven in \S\ref{Goretti}.

\section{Definitions and Preliminaries}\label{secInt}
In this section some needed definitions and results are provided.
\subsection{\texorpdfstring{$V$}{V}-manifolds and Quotient Singularities}

We start giving some basic definitions and properties of $V$-manifolds, weighted projective spaces, embedded $\Q$-resolutions, 
and weighted blow-ups (for a detailed exposition see for instance
\cite{Dolgachev82,AMO11a,AMO11b,Martin11PhD,Ortigas13PhD}).
Let us fix the notation and introduce several tools to calculate a special kind
of embedded resolutions, called \emph{embedded $\Q$-resolutions} (see
Definition~\ref{Qresolution}), for which the ambient space is allowed to contain
abelian quotient singularities.
To do this, we study weighted blow-ups at points.

\begin{dfn}
A $V$-manifold of dimension $n$ is a complex analytic space which admits an open covering $\{U_i\}$ such that $U_i$ is analytically isomorphic to $B_i/G_i$ where $B_i \subset \C^n$ is an open ball and $G_i$ is a finite subgroup of $\GL(n,\C)$.
\end{dfn}

We are interested in $V$-surfaces where the quotient spaces $B_i / G_i$ are given by (finite) abelian groups.

Let $\qa_{d} \subset \C^{*}$ be the cyclic group of $d$-th roots of unity generated by $\xi_d$. Consider a vector of
weights $(a,b)\in\Z^2$ and the action
\begin{equation}\label{actiondab}
\begin{array}{ccc}
 \qa_d \times \C^2 & \overset{\rho}{\longrightarrow} & \C^2, \\
(\xi_d, (x,y)) & \mapsto & (\xi_d^{a}\, x,\xi_{d}^{b}\, y).
\end{array}
\end{equation}
The set of all orbits $\C^2 / \qa_{d}$ is called a {\em cyclic quotient
space of type $(d;a,b)$} and it is denoted by $X(d;a,b)$.

The type $(d;a,b)$ is \emph{normalized} if and only if $\gcd(d,a) = \gcd(d,b) =
1$. If this is not the case, one uses the isomorphism (assuming $\gcd(d,a,b)=1$)
$$
\begin{array}{rcl}
X(d; a,b)  & \longrightarrow & X \left( \frac{d}{(d,a)(d,b)}; \frac{a}{(d,a)}, \frac{b}{(d,b)} \right), \\[0.3cm]
\big[ (x,y) \big] & \mapsto & \big[ (x^{(d,b)},y^{(d,a)}) \big]
\end{array}
$$
to normalize it.

We present different properties of some important sheaves associated to a $V$-surface (see  \cite[\S$4$]{sasakian} and \cite{Dolgachev82}).

\begin{prop}[\cite{sasakian}]\label{proporbi}
 Let $\O_X$ be the structure sheaf of a $V$-surface $X$ then,
 
 \begin{itemize}
  \item If $P$ is not a singular point of $X$ then $\O_{X,P}$ is isomorphic to
the ring of convergent power series $\C\{x,y\}$.
  \item If $P$ is a singular point of $X$ then $\O_{X,P}$ is isomorphic to the
ring of $\qa_d$-invariant convergent power series $\C\{x,y\}^{\qa_d}$.
 \end{itemize}
\end{prop}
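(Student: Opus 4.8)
The plan is to reduce everything to the defining property of an analytic quotient: if $\pi\colon B\to B/G$ is the projection of a $G$-invariant domain $B\subset\C^2$ by a finite group $G\subset\GL(2,\C)$, then the structure sheaf of $B/G$ is the invariant subsheaf $(\pi_*\O_B)^G$ of the direct image. This is Cartan's construction of the quotient in the analytic category, and it identifies the sections of $\O_X$ over an open set $U\cong B/G$ with the $G$-invariant holomorphic functions on $\pi^{-1}(U)$. Passing to germs at a point $P\in X$ then reads off $\O_{X,P}$ as a ring of invariant germs, which is the whole content to be established.

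First I would localize the statement. Given $P\in X$, I choose a chart $U\cong B/G$ with $P=\pi(q)$; after shrinking $B$ to a $G$-stable ball around the orbit of $q$ and using that a finite group fixing a point can be linearized (Cartan's linearization lemma), I may assume the germ $(X,P)$ is the germ of $\C^2/G_q$ at the origin, where $G_q\subset\GL(2,\C)$ is the abelian stabilizer acting linearly. The key technical point that makes this work is that taking $G$-invariants commutes with passing to the stalk at $P$: since $\C$ has characteristic zero, the Reynolds averaging operator $\frac{1}{|G_q|}\sum_{g\in G_q}g^{*}$ is a projector onto invariants and commutes with the filtered colimit defining the stalk, so an invariant germ at $P$ is the same datum as a $G_q$-invariant germ of a function at $q$. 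Hence $\O_{X,P}\cong\C\{x,y\}^{G_q}$ in every case.

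It remains to dispose of the two alternatives and to identify $G_q$. If $P$ is a non-singular point, then $\O_{X,P}$ is a two-dimensional regular analytic local $\C$-algebra, so by the analytic Cohen structure theorem it is isomorphic to $\C\{x,y\}$; equivalently, by Chevalley--Shephard--Todd the invariant ring $\C\{x,y\}^{G_q}$ is regular exactly when $G_q$ is generated by pseudo-reflections, and in that case it is again $\C\{x,y\}$. If $P$ is singular, then $G_q$ is a nontrivial finite abelian group containing no pseudo-reflection, and I would invoke the classical classification of two-dimensional abelian quotient singularities: every such singularity is cyclic, of a normalized type $(d;a,b)$ with $\gcd(d,a)=\gcd(d,b)=1$. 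Thus $G_q\cong\qa_d$ and $\O_{X,P}\cong\C\{x,y\}^{\qa_d}$, as asserted.

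The main obstacle is precisely this last identification, that is, showing the abelian stabilizer may always be put in cyclic normalized form so that the invariant ring is literally $\C\{x,y\}^{\qa_d}$. This is the reduction to the type $(d;a,b)$ recalled before the statement: one simultaneously diagonalizes the commuting elements of $G_q$ and then uses the two-dimensional fact that, up to analytic isomorphism, the resulting abelian action is that of a single cyclic group. The normalization isomorphism $X(d;a,b)\cong X\!\big(\tfrac{d}{(d,a)(d,b)};\tfrac{a}{(d,a)},\tfrac{b}{(d,b)}\big)$ displayed above the proposition is exactly the device that removes the non-coprime factors, and verifying that it induces an isomorphism of the corresponding invariant rings is the one step requiring care.
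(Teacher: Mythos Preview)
The paper does not supply its own proof of this proposition: it is stated as a cited result from \cite{sasakian} (Boyer--Galicki, \emph{Sasakian Geometry}), so there is no argument in the paper to compare yours against.

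Your sketch is a correct outline of the standard proof. Cartan's linearization reduces the germ $(X,P)$ to $(\C^2/G_q,0)$ for a finite linear stabilizer $G_q$; the Reynolds operator in characteristic zero identifies the stalk $\O_{X,P}$ with $\C\{x,y\}^{G_q}$; and the dichotomy smooth/singular is handled by Chevalley--Shephard--Todd (or the Cohen structure theorem) on one side and the classification of abelian surface quotient singularities on the other. The only step worth making explicit is the last one, where you invoke the ``classical classification'' to conclude $G_q\cong\qa_d$: after simultaneous diagonalization the small abelian group $G_q$ lies in the diagonal torus, and smallness (no pseudo-reflections) forces both coordinate projections $G_q\to\C^*$ to be injective, so $G_q$ embeds in a finite subgroup of $\C^*$ and is therefore cyclic. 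Spelling this out, together with the normalization isomorphism you already mention, would turn your proposal into a complete self-contained proof.
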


If no ambiguity seems likely to arise we simply write $\O_{P}$ for the
corresponding local ring or just $\O$ in the case $P=0$.
\begin{dfn}\label{deforbi}
Let $\qa_{d}$ be an arbitrary finite cyclic group, a vector of weights $(a,b)\in\Z^2$ and the 
action given in (\ref{actiondab}).
Associated with $X(d;a,b)$ one has the following $\O_{X,P}$-module: 
$$\O_{X,P}(k):=\{ h \in \C\{x,y\}|\ h(\xi_d^ax,\xi_d^by)=\xi_d^k h(x,y)\},$$
also known as the module of \emph{$k$-invariant} germs in~$X(d;a,b)$.
\end{dfn}

\begin{remark}
Note that 
\begin{equation}
\label{eq-descomp-cxy}
\C\{x,y\}=\bigoplus_{k=0}^{d-1}\O_{X,P}(k)
\end{equation}
\end{remark}

\begin{remark}[\cite{sasakian}]
 Let $l,k\in \Z$. Using the notation above one clearly has the following properties: 
 \begin{itemize}
 \item $\O_{X,P}(k)=\O_{X,P}(d+k),$
 \item $\O_{X,P}(l)\otimes \O_{X,P}(k)\subset \O_{X,P}(l+k).$
 \end{itemize}
\end{remark}

These modules produce the corresponding sheaves $\O_X(k)$ on a $V$-surface $X$ also called \emph{orbisheaves}.

One of the main examples of $V$-surfaces is the so-called \emph{weighted
projective plane} (e.g.~\cite{Dolgachev82}). Let $w:=(\w_0,\w_1,\w_2)\in \Z_{>0}^3$
be a weight vector, that is, a triple of pairwise coprime positive
integers. 
There is a natural action of the multiplicative group $\C^{*}$ on
$\C^{3}\setminus\{0\}$ given by
$$
  (x_0,x_1,x_2) \longmapsto (t^{\w_0} x_0,t^{\w_1} x_1,t^{\w_2} x_2).
$$
The universal geometric quotient of $\frac{\C^{3}\setminus\{0\}}{\C^{*}}$ 
under this action is denoted by $\PP^2_w$ and it is called the {\em weighted projective plane} of type $\w$.

Let us recall the adapted concept of resolution in this category.

\begin{dfn}[\cite{Martin11PhD}]\label{Qresolution}
An {\em embedded $\Q$-resolution} of a hypersurface $(H,0) \subset (M,0)$ in an abelian quotient
space is a proper analytic map~$\pi: X \to (M,0)$ such that:
\begin{enumerate}
\item $X$ is a $V$-manifold with abelian quotient singularities,
\item $\pi$ is an isomorphism over $X\setminus \pi^{-1}(\Sing(H))$,
\item $\pi^{-1}(H)$ is a $\Q$-normal crossing hypersurface on $X$ (see
\cite[Definition~1.16]{Steenbrink77}).
\end{enumerate}
\end{dfn}

Embedded $\Q$-resolutions are a natural generalization of the usual embedded
resolutions, for which some invariants, such as $\delta$ can be effectively
calculated~(\cite{CMO12}).

As a key tool to construct embedded $\Q$-resolutions of abelian quotient surface singularities we will recall 
toric transformations or weighted blow-ups in this context (see~\cite{Oka-nondegenerate} as a general reference),
which can be interpreted as blow-ups of $\mathfrak{m}$-primary ideals.

Let $X$ be an analytic surface with abelian quotient singularities. Let us define the weighted blow-up 
$\pi: \widehat{X} \to X$ at a point $P\in X$ with respect to $\w = (p,q)$. 
Since it will be used throughout the paper, we briefly describe the local equations of a weighted blow-up at a
point $P$ of type $(d;a,b)$ (see \cite[Chapter 1]{Martin11PhD} for further details).

The birational morphism $\pi = \pi_{(d;a,b),\w}: \widehat{X(d;a,b)}_{\w} \to X(d;a,b)$ can be described as usual by 
covering $\widehat{X(d;a,b)}_{\w}$ into two charts $\widehat{U}_1 \cup \widehat{U}_2$, where for instance 
$\widehat{U}_1$ is of type $X \left( \displaystyle\frac{pd}{e}; 1, \frac{-q+a' pb}{e} \right)$, 
with $a'a=b'b\equiv 1 \mod (d)$ and $e = \gcd(d,pb-qa)$. The first chart is given by
\begin{center}
\begin{equation}
\label{eq-charts}
\begin{array}{rcl}
X \left( \displaystyle\frac{pd}{e}; 1, \frac{-q+a' pb}{e} \right)  & \longrightarrow &
\widehat{U}_1, \\ 
\big[ (x^e,y) \big] & \mapsto &  
\big[ ((x^p,x^q y),[1:y]_{\w}) \big]_{(d;a,b)}
\end{array}
\end{equation}
\end{center}
The second one is given analogously.

The exceptional divisor $E = \pi_{(d;a,b),\w}^{-1}(0)$ is identified with 
$\PP^1_{\w}(d;a,b) := \PP^1_{\w}/\qa_{d}$. The singular points are cyclic 
and correspond to the origins of the two charts.

\subsection {Log-resolution logarithmic forms}\label{resvmanifolds}
All the preliminaries about De Rham cohomology for projective varieties
with quotient singularities can be found in~\cite[Chapter 1]{Steenbrink77} and
the ones about $C^\infty$~log complex of quasi projective algebraic varieties in
\cite[\S$1.3$]{JIphd}.
Here we focus on the non-normal crossing $\QQ$-divisor case in
weighted projective planes.

Let $\cD$ be a  $\QQ$-divisor in a surface $X$ with quotient singularities. The
complement of $\cD$ will be denoted by $X_{\cD}$. 
Let us fix
$\pi : Y  \longrightarrow X$
a $\Q$-resolution of the singularities of $\cD$ so that
the reduced  $\QQ$-divisor
$\overline{\cD}=\pi^*(\cD)_{\red}$ is a union of smooth  $\QQ$-divisors
on $Y$ with $\Q$-normal crossings.

Using the results in~\cite{Steenbrink77} we can generalize Definition 2.7 in
\cite{JIphd} for a non-normal crossing $\QQ$-divisor in~$X$.
 
\begin{dfn}\label{deflog}
% \begin{enumerate}
% \item
The sheaf $\pi_*{\Omega}_{Y}(\log \langle \overline{\cD} \rangle)$
is called the sheaf of \emph{log-resolution logarithmic forms} on $X$ 
with respect to~$\cD$.

\begin{remark}
Note that the space $Y$ in the previous definition is not smooth and we use 
the standard definition of logarithmic sheaf for $V$-manifolds and $V$-normal crossing divisors 
$\overline{\cD}$ due to Steenbrink~\cite{Steenbrink77}.
\end{remark}

In the sequel, a log-resolution logarithmic form with respect to a  $\QQ$-divisor 
$\cD$ and a $\Q$-resolution $\pi$ will be referred to as simply a logarithmic
form, if $\cD$ and $\pi$ are known and no ambiguity is likely to arise.
\end{dfn}

\begin{remark}\label{problema1forma}
Let $h$ be an analytic germ on $X(d;a,b)$ where the type is normalized. Notice
that $\omega=h\dfrac{dx\wedge dy}{xy}$ automatically defines a logarithmic form 
with poles along $xy$, whereas expressions of the form 
$\tilde{\omega}=h\dfrac{dx\wedge dy}{x}$ might not even define a $2$-form
unless $h$ is so that $\tilde{\omega}$ is invariant under~$\qa_d$. 
\end{remark}

Also note that $\pi_*{\Omega}_{Y}(\log \langle \overline{\cD} \rangle)$ depends, in principle,
on the given resolution $\pi$. The following results shows that this is not the case.

\begin{prop}\label{nodepende} 
The sheaves 
$\pi_* {\Omega^\bullet}_{Y} (\log \langle \overline{D} \rangle)$
of logarithmic forms on $X$ with respect to the $\QQ$-divisor $\cD$
do not depend on the chosen $\Q$-resolution.
\end{prop}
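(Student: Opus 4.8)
The plan is to reduce the statement to a local assertion about weighted blow-ups and then appeal to the comparison theory of logarithmic forms on $V$-manifolds developed by Steenbrink. First I would observe that the question is Zariski-local on $X$, so it suffices to work in a chart $X(d;a,b)$ around a single quotient singularity (and away from such points the sheaf is the usual $\pi_*\Omega_Y(\log\langle\overline{\cD}\rangle)$, for which independence is classical by the standard resolution-independence argument for logarithmic de Rham complexes on smooth varieties). Given two $\Q$-resolutions $\pi_1\colon Y_1\to X$ and $\pi_2\colon Y_2\to X$ of $\cD$, the standard device is to dominate both by a common third $\Q$-resolution $\pi_3\colon Y_3\to X$ that factors as $Y_3\xrightarrow{\sigma_i}Y_i\xrightarrow{\pi_i}X$ for $i=1,2$; existence of such a common refinement in the $V$-manifold category follows by repeatedly applying weighted blow-ups at points as recalled around \eqref{eq-charts}, since any two proper birational modifications of a surface are dominated by a sequence of such blow-ups. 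This reduces the claim to showing $(\pi_i)_*\Omega_{Y_i}(\log\langle\overline{\cD}\rangle)=(\pi_3)_*\Omega_{Y_3}(\log\langle\overline{\cD}\rangle)$ for a single blow-down $\sigma\colon Y_3\to Y_i$.

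The core of the argument is then the following local lemma: if $\sigma\colon \widehat{Z}\to Z$ is a weighted blow-up at a point of a $V$-surface with a $\Q$-normal crossing divisor $\overline{\cD}$, and $\widehat{\overline{\cD}}$ denotes the reduced total transform, then $\sigma_*\Omega_{\widehat{Z}}(\log\langle\widehat{\overline{\cD}}\rangle)=\Omega_Z(\log\langle\overline{\cD}\rangle)$. To prove this I would work in the explicit charts of \eqref{eq-charts}: pull back a local generator $\frac{dx\wedge dy}{xy}$ (or $\frac{dx}{x}$, $dx\wedge dy$ depending on how many components of $\overline{\cD}$ pass through the centre) and check, by the change of variables $x=x^p$, $y=x^q y$ (and symmetrically in the second chart), that a meromorphic form on $Z$ lies in $\Omega_Z(\log\langle\overline{\cD}\rangle)$ precisely when its pullback lies in $\Omega_{\widehat{Z}}(\log\langle\widehat{\overline{\cD}}\rangle)$ on each chart. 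One inclusion — that logarithmic forms on $Z$ pull back to logarithmic forms on $\widehat{Z}$ — is the easy functoriality of the construction. The reverse inclusion is where one must be careful: a form that acquires at worst log poles along the exceptional divisor $E=\PP^1_\w(d;a,b)$ after pullback must be shown to have had at worst log poles along $\overline{\cD}$ before; this is the standard "no new poles appear at the exceptional divisor" computation, but now with the extra bookkeeping of the $\qa_d$-action and the fractional weights, using Steenbrink's local description of $\Omega_{\widehat{Z}}(\log\langle\cdot\rangle)$ on the chart singularities. Here I would lean on the invariance observation in Remark~\ref{problema1forma} to keep track of which monomial forms are genuinely $\qa_d$-invariant.

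The main obstacle I expect is precisely this reverse inclusion in the presence of quotient singularities on $\widehat{Z}$: one has to verify that the Steenbrink logarithmic sheaf on the new chart singularities $X(\tfrac{pd}{e};1,\tfrac{-q+a'pb}{e})$ does not "see" poles along $E$ that would fail to descend, i.e. that taking $\sigma_*$ exactly cancels the contribution of $E$. A clean way to organise this is to note that $\langle\widehat{\overline{\cD}}\rangle$ as used here is the \emph{round-down} of the divisorial part, so the multiplicity $1$ that $E$ would receive is killed and $E$ contributes nothing to $\langle\cdot\rangle$ on the locus where $\overline{\cD}$ was already normal crossing — reducing the whole computation to a neighbourhood of the finitely many points where $E$ meets the strict transform of $\overline{\cD}$. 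After that the verification is a finite monomial check in the two charts. Finally, once the single-blow-up lemma is in hand, a descending induction on the number of weighted blow-ups needed to pass from $Y_3$ down to $Y_i$ finishes the proof, and combining the two sides $i=1,2$ through $Y_3$ yields $(\pi_1)_*\Omega_{Y_1}(\log\langle\overline{\cD}\rangle)=(\pi_2)_*\Omega_{Y_2}(\log\langle\overline{\cD}\rangle)$, as claimed; the argument for the full complex $\Omega^\bullet$ rather than $\Omega^2$ alone is identical degree by degree.
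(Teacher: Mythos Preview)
Your strategy --- dominate two resolutions by a common one and reduce to a single-blow-up lemma --- is the same skeleton the paper uses, but the paper executes it much more economically. Instead of staying in the $\Q$-resolution category, the paper resolves each $V$-manifold $Y_i$ further to a \emph{smooth} surface, invokes the classical strong factorization theorem for smooth surfaces to get a common smooth roof $\tilde Y$, and then cites Steenbrink's result (\cite[p.~351]{Steenbrink77}) that for a $V$-manifold with $V$-normal crossing divisor the log complex is already the pushforward from any smooth resolution. This makes the proof essentially a two-line diagram chase: no chart computation is needed, and the ``single blow-up lemma'' you propose to prove by hand is absorbed into the cited theorem.

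Two points in your write-up deserve correction. First, the step ``any two proper birational modifications of a surface are dominated by a sequence of weighted blow-ups'' is asserted without justification; a factorization/domination theorem inside the $V$-surface category is not standard, and the natural fix --- pass to smooth resolutions first and use Zariski's theorem --- is exactly what the paper does. Second, your remark that ``$\langle\widehat{\overline{\cD}}\rangle$ is the round-down $\ldots$ so the multiplicity~$1$ that $E$ would receive is killed'' is mistaken: in this paper $\langle\cdot\rangle$ is just Steenbrink's notation for the logarithmic complex along the given reduced divisor, not a round-down, and the exceptional divisor $E$ \emph{is} part of $\widehat{\overline{\cD}}$ and \emph{does} carry a log pole. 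The reverse inclusion you worry about holds precisely because log poles along $E$ are permitted upstairs, not because $E$ is thrown away.
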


\begin{proof}
Let $Y$ and $Y'$ be two $\Q$-resolutions of $(X,\cD)$. After resolving $(Y,\bar \cD)$ and 
$(Y',\bar \cD')$ and applying the strong factorization theorem for smooth surfaces, there 
exists a smooth surface $\tilde Y$ obtained as a finite number of blow-ups of both $Y$ and $Y'$
which is a common resolution of $(Y,\bar \cD)$ and $(Y',\bar \cD')$. 
Since 
$$\Omega^\bullet_{Y} (\log \langle \overline{D} \rangle)=
\rho_*\Omega^\bullet_{\tilde Y} (\log \langle \tilde{D} \rangle) 
\quad
\text{and} 
\quad
\Omega^\bullet_{Y'} (\log \langle \overline{D'} \rangle)=
\rho'_*\Omega^\bullet_{\tilde Y} (\log \langle \tilde{D} \rangle)$$
(see~\cite[p.~351]{Steenbrink77}) where $\overline{D},\overline{D'},$ and $\tilde{D}$ are the 
corresponding total preimages and $\rho$, $\rho'$ are the corresponding resolutions.
The result follows, since 
$$
\pi_* \Omega^\bullet_{Y} (\log \langle \overline{D} \rangle)=
\pi_*\rho_*\Omega^\bullet_{\tilde Y} (\log \langle \tilde{D} \rangle)=
\pi'_*\rho'_*\Omega^\bullet_{\tilde Y} (\log \langle \tilde{D} \rangle)=
\pi'_* \Omega^\bullet_{Y'} (\log \langle \overline{D'} \rangle)
$$
due to the commutativity of the diagram $\pi\rho=\pi'\rho'$.
\end{proof}

\begin{notation}
\label{not-logres}
In the future, we will refer to such sheaves as logarithmic sheaves on $\cD$ and they will 
be denoted simply as
$
\Omega^\bullet_{X} (\logres \langle \cD \rangle).
$
\end{notation}

\subsection{Dedekind Sums}\label{dedekind_sec}

Let $a,b,c,t$ be positive integers with $\gcd(a,b)=\gcd(a,c)=\gcd(b,c)=1$. The
aim of this part (see \cite{Beck07} for further details) is to
give a way to compute the cardinal of the following two sets:
$$\Delta_1:=\{(x,y)\in \Z_{\geq 0}^2|\ ax+by\leq t\},$$
$$\Delta_2:=\{ (x,y,z)\in \Z_{\geq 0}^3|\ ax+by+cz=t\}.$$

Note that $\#\Delta_1$ cannot be computed by means of Pick's Theorem unless $t$
is divisible by $a$ and $b$.

Denote by $L_{\Delta_i}(t)$ the cardinal of $\Delta_i$. Let us consider the
following notation.

\begin{notation}
If we denote by $\xi_a:=e^{\frac{2i\pi}{a}}$, consider

 \begin{equation}
  \label{ecpw*}
\array{lcl}
p_{\{a,b,c\}}(t)&:=&\poly_{\{a,b,c\}}(t)+\frac{1}{a}\sum_{k=1}^{a-1}\frac{1}{
(1-\xi_a^{kb})(1-\xi_a^{kc})\xi_a^{kt}} \\
\\
&+&\frac{1}{b}\sum_{k=1}^{b-1}\frac{1}{(1-\xi_b^{ka})(1-\xi_b^{kc})\xi_b^{kt}}
+\frac{1}{c}\sum_{k=1}^{c-1}\frac{1}{(1-\xi_c^{ka})(1-\xi_c^{kb})\xi_c^{kt}},
\endarray
\end{equation}
 with
  \begin{equation*}
%   \label{dim3}
 \poly_{\{a,b,c\}}(t):=\frac{t^2}{2abc}+\frac{t}{2}\left(\frac{1}{ab}+\frac{1}{
ac}+\frac{1}{bc}\right)+\frac{3(ab+ac+bc)+a^2+b^2+c^2}{12abc}.
\end{equation*}
\end{notation}

\begin{remark}
 Notice that in particular, one has
\begin{equation}\label{ecpw1*}  
p_{\{a,b,1\}}(t)=\poly_{\{a,b,1\}}(t)+\frac{1}{a}\sum_{k=1}^{a-1}\frac{1}{
(1-\xi_a^{kb})(1-\xi_a^{k})\xi_a^{kt}}+\frac{1}{b}\sum_{k=1}^{b-1}\frac{1}{
(1-\xi_b^{ka})(1-\xi_b^{k})\xi_b^{kt}},
\end{equation}
with
\begin{equation*}
 \poly_{\{a,b,1\}}(t)=\frac{t^2}{2ab}+\frac{t}{2}\left(\frac{1}{ab}+\frac{1}{a}
+\frac{1}{b}\right)+\frac{3(ab+a+b)+a^2+b^2+1}{12ab}.
\end{equation*}
\end{remark}

\begin{thm}[\cite{Beck07}]
 One has the following result,
 $$L_{\Delta_1}(t)=p_{\{a,b,1\}}(t) \text{ and }
L_{\Delta_2}(t)=p_{\{a,b,c\}}(t).$$
\end{thm}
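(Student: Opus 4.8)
The plan is to reduce both set-counting problems to residue computations of rational generating functions, and then collect the pieces exactly into the claimed polynomial-plus-Dedekind-sum shape. First I would write down the generating function for $\Delta_2$: the number of solutions of $ax+by+cz=t$ in nonnegative integers is the constant term (coefficient of $u^0$) of the Laurent series
$$
\frac{1}{(1-u^a)(1-u^b)(1-u^c)\,u^t},
$$
or, equivalently, $L_{\Delta_2}(t) = \frac{1}{2\pi i}\oint \frac{du}{(1-u^a)(1-u^b)(1-u^c)\,u^{t+1}}$ over a small circle about the origin. By expanding the contour (or by partial fractions in $u$) this equals minus the sum of residues at all nonzero poles, namely at $u=1$ (a pole of order $3$) and at the primitive $a$-th, $b$-th, $c$-th roots of unity $\xi_a^{k}$, $\xi_b^{k}$, $\xi_c^{k}$ with $1\le k\le a-1$ etc. (here the pairwise coprimality of $a,b,c$ guarantees these root sets are disjoint and each nontrivial root is a simple pole). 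The residue at $u=1$, after a Taylor expansion of $(1-u^a)(1-u^b)(1-u^c)u^{t}$ around $u=1$, produces precisely the polynomial $\poly_{\{a,b,c\}}(t)$ — the quadratic term $\tfrac{t^2}{2abc}$, the linear term involving $\tfrac12(\tfrac1{ab}+\tfrac1{ac}+\tfrac1{bc})$, and the constant $\tfrac{3(ab+ac+bc)+a^2+b^2+c^2}{12abc}$ come out of the first three coefficients of that expansion. The residue at a simple pole $\xi_a^{k}$ gives $\tfrac1a\cdot\tfrac{1}{(1-\xi_a^{kb})(1-\xi_a^{kc})\xi_a^{kt}}$ after using $\tfrac{d}{du}(1-u^a)\big|_{u=\xi_a^k} = -a\xi_a^{-k}$; summing over $k$ and over the three root families yields exactly the three Dedekind-type sums in~\eqref{ecpw*}. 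Hence $L_{\Delta_2}(t)=p_{\{a,b,c\}}(t)$.

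For $\Delta_1$ I would deduce the formula from the $\Delta_2$ case by the standard slack-variable trick. Introducing $z\ge 0$ with $ax+by+z = t$ shows $\#\{(x,y)\in\Z_{\ge0}^2 : ax+by\le t\}$ equals $\#\{(x,y,z)\in\Z_{\ge0}^3 : ax+by+z=t\}$, i.e.\ $L_{\Delta_1}(t) = L_{\Delta_2}(t)$ for the weight vector $(a,b,1)$. Specializing $c=1$ in the formula just proved kills the third Dedekind sum (its index range $1\le k\le c-1$ is empty) and turns $\poly_{\{a,b,c\}}$ into $\poly_{\{a,b,1\}}$, giving precisely~\eqref{ecpw1*}; so $L_{\Delta_1}(t)=p_{\{a,b,1\}}(t)$. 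Alternatively one can run the same residue argument directly on $\frac{1}{(1-u^a)(1-u^b)(1-u)\,u^{t+1}}$, where now $u=1$ is a pole of order $3$ as well; the slack-variable route is cleaner and avoids redoing the expansion.

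The main obstacle — really the only nonroutine point — is the bookkeeping in the order-$3$ residue at $u=1$: one must expand $(1-u^a)(1-u^b)(1-u^c)$ to third order in $(u-1)$, multiply by the expansion of $u^{-t}=\big(1+(u-1)\big)^{-t}$, and extract the coefficient that survives after the $u^{-1}$ factor, then check that the resulting rational expression in $a,b,c,t$ simplifies to the stated $\poly_{\{a,b,c\}}(t)$; this requires the elementary identities $\sum_{j<a} j = \binom a2$ and $\sum_{j<a} j^2 = \tfrac{a(a-1)(2a-1)}{6}$ (equivalently, the values of Bernoulli-type sums) to recognize the $\tfrac{a^2+b^2+c^2}{12abc}$ and $\tfrac{3(ab+ac+bc)}{12abc}$ contributions. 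Everything else — disjointness and simplicity of the nontrivial poles, vanishing of the residue at infinity, the slack-variable bijection — is immediate from $\gcd(a,b)=\gcd(a,c)=\gcd(b,c)=1$ and the fact that the integrand decays like $u^{-t-1-a-b-c}$ at infinity with $t\ge 0$. This is exactly the computation carried out in~\cite{Beck07}, so I would cite it for the residue-at-$1$ simplification and only sketch the pole-by-pole decomposition here.
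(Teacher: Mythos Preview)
Your argument is correct and is precisely the partial-fraction/residue computation carried out in Beck--Robins~\cite{Beck07}; the paper itself does not prove this statement but simply quotes it as a preliminary result from that reference, so there is no ``paper's own proof'' to compare against. Your slack-variable reduction of $\Delta_1$ to the $c=1$ case of $\Delta_2$ is exactly how the two formulas are linked there as well.
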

 
Now we are going to define the Dedekind sums giving some properties
which will be particularly useful for future results. See~\cite{Rad72} and
\cite{Beck07} for a more detailed exposition.

\begin{dfn}[\cite{Rad72}]\label{dedekind}
Let $a,b$ be integers, $\gcd(a,b)=1$, $b\geq 1$. The Dedekind sum $s(a,b)$ is
defined as follows
\begin{equation}\label{lasuma}
 s(a,b):=\sum_{j=1}^{b-1}\left(\left(\frac{ja}{b}\right)\right)\left(\left(\frac
{j}{b}\right)\right),
\end{equation}
where the symbol $\left(\left(x\right)\right)$ denotes
$$ \left(\left(x\right)\right) = \left\{ \begin{array}{ll}
				    x-[x]-\frac{1}{2} & \mbox{if $x$ is not an
integer},\\
				    0 & \mbox{if $x$ is an integer},\end{array}
\right.$$
with $[x]$ the greatest integer not exceeding $x$. 
\end{dfn}

The following result, referred to as a Reciprocity Theorem (see \cite[Corollary
$8.5$]{Beck07} or  \cite[Theorem $2.1$]{Rad72} for further details) will be key in 
what follows.

\begin{thm}[Reciprocity Theorem, \cite{Beck07,Rad72}]\label{prop4}
Let $a$ and $b$ be two coprime integers. Then 
$$s(a,b)+s(b,a)=-\frac{1}{4}+\frac{1+a^2+b^2}{12ab}.$$
\end{thm}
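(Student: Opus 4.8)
The plan is to reduce the identity to a finite cotangent sum and then evaluate that sum by the residue theorem. First I would replace the definition~\eqref{lasuma} by the equivalent cotangent representation
\begin{equation*}
s(a,b)=\frac{1}{4b}\sum_{\mu=1}^{b-1}\cot\left(\frac{\pi\mu}{b}\right)\cot\left(\frac{\pi a\mu}{b}\right),
\end{equation*}
which follows by inserting the finite Fourier expansion of the sawtooth function $\left(\left(j/b\right)\right)$ into~\eqref{lasuma} and using the orthogonality relations for the roots of unity $\xi_b$. This rewriting is what links the arithmetic of $s(a,b)$ to the analytic machinery of the cotangent, and it is the step that makes the reciprocity law accessible.

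Next, with $\gcd(a,b)=1$, I would study the $1$-periodic meromorphic function
\begin{equation*}
f(z)=\cot(\pi a z)\,\cot(\pi b z)\,\cot(\pi z).
\end{equation*}
Inside the strip $-\varepsilon<\operatorname{Re}(z)<1-\varepsilon$ its poles are simple at $z=\mu/b$ (for $\mu=1,\dots,b-1$) and at $z=\nu/a$ (for $\nu=1,\dots,a-1$), together with a triple pole at the origin. Summing the residues at the points $\mu/b$ reproduces exactly $\tfrac{4}{\pi}\,s(a,b)$ through the cotangent formula above, and symmetrically the residues at $\nu/a$ give $\tfrac{4}{\pi}\,s(b,a)$. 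For the triple pole at $z=0$ I would expand each factor as $\cot(\pi\lambda z)=\tfrac{1}{\pi\lambda z}-\tfrac{\pi\lambda z}{3}-\cdots$ and extract the coefficient of $z^{-1}$ in the product; a short computation yields the residue $-\tfrac{a^2+b^2+1}{3\pi a b}$, which is the source of the rational term $\tfrac{1+a^2+b^2}{12ab}$.

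Finally I would integrate $f$ counterclockwise over the boundary of a tall rectangle of unit width, with vertical sides at $\operatorname{Re}(z)=-\varepsilon$ and $\operatorname{Re}(z)=1-\varepsilon$ and horizontal sides at $\operatorname{Im}(z)=\pm T$. Periodicity $f(z+1)=f(z)$ makes the two vertical contributions cancel exactly, while as $T\to\infty$ one has $\cot(\pi\lambda z)\to\mp i$ on the upper/lower sides, so $f\to i$ above and $f\to -i$ below and the horizontal parts tend to an explicit constant equal to $-2i$; this constant is precisely what produces the $-\tfrac14$. Equating $\oint f\,dz=2\pi i\sum\Res$ with this limiting value and solving for $s(a,b)+s(b,a)$ then collapses to the stated formula. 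The main obstacle is not any single residue but the analytic bookkeeping on the horizontal sides together with the tracking of orientations and factors of $\pi$ and $i$: one must choose the rectangle so that \emph{all} poles avoid its boundary, confirm that the vertical pieces really cancel for every $T$ before passing to the limit, and verify that the limiting horizontal integral reduces to exactly the constant needed so that the analytic term becomes precisely $-\tfrac14$.
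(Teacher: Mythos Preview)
Your argument is the classical residue-theoretic proof of Dedekind reciprocity and is correct as sketched; the cotangent representation of $s(a,b)$, the residues of $f(z)=\cot(\pi az)\cot(\pi bz)\cot(\pi z)$ at $\mu/b$, $\nu/a$, and $0$, and the limiting value $-2i$ of the contour integral all check out and combine to give exactly $s(a,b)+s(b,a)=-\tfrac14+\tfrac{1+a^2+b^2}{12ab}$.

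There is, however, nothing in the paper to compare it to: Theorem~\ref{prop4} is stated purely as a citation from \cite{Beck07,Rad72} and is not proved in the paper. Your sketch is essentially the proof given in Rademacher--Grosswald. The only point that would need tightening in a full write-up is the passage to the limit on the horizontal sides: one must check that $\cot(\pi\lambda(x\pm iT))\to\mp i$ \emph{uniformly} in $x$ on the segment, so that the integrals (not merely the integrand) converge to the claimed constant; this is routine since $|\cot(\pi\lambda z)\mp i|=O(e^{-2\pi\lambda T})$.
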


Let us express the sum (\ref{lasuma}) in terms of $a$th-roots of the unity (see
for instance \cite[Example $8.1$]{Beck07} or \cite[Chapter $2$, $(18b)$]{Rad72}
for further details).

\begin{prop}[\cite{Beck07,Rad72}]\label{prop3}
 Let $a,b$ be integers, $\gcd(a,b)=1$, $b\geq 1$, denote by $\xi_b$ a primitive
$b$th-root of the unity. The Dedekind sum $s(a,b)$ can be written as follows:
$$s(a,b)=\frac{b-1}{4b}-\frac{1}{b}\sum_{k=1}^{b-1}\frac{1}{(1-\xi_b^{ka}
)(1-\xi_b^k)}.$$
\end{prop}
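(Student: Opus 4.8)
The plan is to prove the identity directly from the sawtooth definition~\eqref{lasuma} by means of a finite Fourier expansion on $\Z/b\Z$, carrying everything in terms of the quantities $u_k:=\frac{1}{1-\xi_b^{k}}$ so that the right-hand side of the proposition appears on its own. Throughout I take $\xi_b=e^{2\pi i/b}$ and regard $f(j):=((j/b))$ as a function on $\Z/b\Z$, so that $f(0)=0$ and $f(j)=\frac{j}{b}-\frac12$ for $1\le j\le b-1$.

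First I would record the elementary summation lemma: for $w$ a $b$-th root of unity with $w\neq 1$ one has $\sum_{j=0}^{b-1}jw^j=\frac{b}{w-1}$, which follows by multiplying by $(1-w)$, telescoping, and using $\sum_{j=0}^{b-1}w^j=0$. Combining this with $\sum_{j=1}^{b-1}\xi_b^{-jk}=-1$ (for $1\le k\le b-1$), a direct computation of the Fourier coefficients $c_k:=\frac1b\sum_{j=0}^{b-1}f(j)\xi_b^{-jk}$ gives $c_0=0$ and, for $k\neq 0$,
\[
c_k=\frac1b\left(\frac{1}{\xi_b^{-k}-1}+\frac12\right)=\frac1b\left(u_k-\frac12\right),
\]
where the last equality uses $\frac{1}{\xi_b^{-k}-1}=\frac{\xi_b^{k}}{1-\xi_b^{k}}=u_k-1$. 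Hence $f(j)=\sum_{k=1}^{b-1}c_k\,\xi_b^{jk}$.

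Next I would feed this expansion into $s(a,b)=\sum_{j=0}^{b-1}f(aj)f(j)$ (the $j=0$ term vanishing). Expanding both factors and invoking the orthogonality relation $\sum_{j=0}^{b-1}\xi_b^{j(al+k)}=b$ when $al+k\equiv 0\pmod b$ and $0$ otherwise, together with the fact that $\gcd(a,b)=1$ makes $l\mapsto(-al\bmod b)$ a bijection of $\{1,\dots,b-1\}$, collapses the double sum to
\[
s(a,b)=b\sum_{l=1}^{b-1}c_l\,c_{-al}.
\]
The key simplification here is the conjugation symmetry $u_{-m}=1-u_m$, equivalent to $\frac{1}{1-\xi_b^{m}}+\frac{1}{1-\xi_b^{-m}}=1$, which turns the second factor into $c_{-al}=\frac1b\left(\frac12-u_{al}\right)$.

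Finally I would expand $c_l c_{-al}=\frac{1}{b^2}\left(u_l-\frac12\right)\left(\frac12-u_{al}\right)$ and sum over $l$, evaluating the three elementary pieces $\sum_{l=1}^{b-1}u_l=\sum_{l=1}^{b-1}u_{al}=\frac{b-1}{2}$ (by the reflection $l\mapsto b-l$, using $u_l+u_{b-l}=1$, and the permutation $l\mapsto al\bmod b$) and $\sum_{l=1}^{b-1}\frac14=\frac{b-1}{4}$; the one surviving cross term is precisely $\sum_{l=1}^{b-1}u_l u_{al}=\sum_{l=1}^{b-1}\frac{1}{(1-\xi_b^{la})(1-\xi_b^{l})}$. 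Collecting terms yields $s(a,b)=\frac1b\big(\frac{b-1}{4}-\sum_l u_l u_{al}\big)=\frac{b-1}{4b}-\frac1b\sum_{k=1}^{b-1}\frac{1}{(1-\xi_b^{ka})(1-\xi_b^{k})}$, as claimed. The main obstacle is purely bookkeeping: pinning down the Fourier coefficient $c_k$ exactly (the boundary value $c_0=0$ and the sign in $u_k-\tfrac12$), and tracking the reduction modulo $b$ in the orthogonality/permutation step; once these are correct, the elementary sums $\sum u_l=\frac{b-1}{2}$ finish the argument. An equivalent route substitutes $u_k=\frac12+\frac{i}{2}\cot(\pi k/b)$ throughout, recovering the classical cotangent formula $s(a,b)=\frac{1}{4b}\sum_{k=1}^{b-1}\cot(\frac{\pi k}{b})\cot(\frac{\pi ka}{b})$ as an intermediate identity, but the root-of-unity bookkeeping above reaches the stated form more directly.
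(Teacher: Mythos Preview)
Your argument is correct: the finite Fourier expansion of the sawtooth function $f(j)=((j/b))$ on $\Z/b\Z$, the orthogonality step collapsing $s(a,b)$ to $b\sum_{l}c_lc_{-al}$, the reflection $u_{-m}=1-u_m$, and the evaluation $\sum_{l=1}^{b-1}u_l=\tfrac{b-1}{2}$ all check out, and the final bookkeeping gives exactly the stated formula.

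There is nothing to compare against in the paper itself: Proposition~\ref{prop3} is stated without proof, with a pointer to \cite[Example~8.1]{Beck07} and \cite[Chapter~2,~(18b)]{Rad72}. Your proof is in fact the standard derivation appearing in those references (Beck--Robins develops it precisely via the finite Fourier transform of the sawtooth), so you have supplied what the paper chose to cite. The alternative cotangent route you mention, via $u_k=\tfrac12+\tfrac{i}{2}\cot(\pi k/b)$, is the formulation Rademacher--Grosswald favour; both lead to the same place and your choice to stay with $u_k$ is the cleaner one for the form of the identity as stated here.
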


Let us exhibit some useful properties of the Dedekind sum $s(a,b)$.

Since $((-x))=-((x))$ it is clear that

\begin{equation*}\label{prop6}
 s(-a,b)=-s(a,b)
\end{equation*}
 and also
$$s(a,-b)=s(a,b).$$
If we define $a'$ by $a'a\equiv 1 \mod b$ then 
$$s(a',b)=s(a,b).$$
\begin{prop}[\cite{Beck07,Rad72}]\label{prop5}
 Let $a,b,c$ be integers with $\gcd(a,b)=\gcd(a,c)=\gcd(b,c)=1$. Define $a'$ by
$a'a\equiv 1 \mod b$, $b'$ by $b'b\equiv 1 \mod c$ and $c'$ by $c'c\equiv 1 \mod
a$. Then
$$s(bc',a)+s(ca',b)+s(ab',c)=-\frac{1}{4}+\frac{a^2+b^2+c^2}{12abc}.$$
\end{prop}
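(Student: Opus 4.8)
The final statement is the three-term reciprocity law for Dedekind sums (Proposition~\ref{prop5}). The plan is to reduce it to the two-term Reciprocity Theorem (Theorem~\ref{prop4}) together with the root-of-unity expression for the Dedekind sum (Proposition~\ref{prop3}), exactly the tools assembled just above in the excerpt. The cleanest route, however, passes through the generating function $p_{\{a,b,c\}}(t)$ introduced in~\eqref{ecpw*}, whose closed form already packages a symmetric combination of Dedekind-type root-of-unity sums. So first I would recall that, by Proposition~\ref{prop3}, each root-of-unity sum in~\eqref{ecpw*} is, up to an explicit elementary term, a Dedekind sum: writing $\xi_a$ for a primitive $a$th root of unity, the term $\frac1a\sum_{k=1}^{a-1}\frac{1}{(1-\xi_a^{kb})(1-\xi_a^{kc})}$ is the $t=0$ (equivalently, constant) part of the first sum, and I would identify it with $s(b c', a)$ after using the substitution properties $s(a',b)=s(a,b)$ and $s(-a,b)=-s(a,b)$ stated right after Proposition~\ref{prop3}.

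The key computational step is to evaluate $p_{\{a,b,c\}}(t)$ at a value of $t$ that kills the $\xi^{kt}$ denominators, namely $t=0$ (or any multiple of $abc$), so that $\xi_a^{kt}=\xi_b^{kt}=\xi_c^{kt}=1$. At such a value the three root-of-unity sums in~\eqref{ecpw*} collapse precisely into the form appearing in Proposition~\ref{prop3}, and hence into $-\bigl(s(bc',a)+s(ca',b)+s(ab',c)\bigr)$ plus the elementary terms $\frac{a-1}{4a}+\frac{b-1}{4b}+\frac{c-1}{4c}$ coming from the $\frac{b-1}{4b}$-type corrections in that proposition. On the other hand, at $t=0$ the combinatorial set $\Delta_2=\{(x,y,z)\in\Z_{\ge0}^3\mid ax+by+cz=0\}$ contains exactly one point, the origin, so $L_{\Delta_2}(0)=p_{\{a,b,c\}}(0)=1$ by the theorem attributed to~\cite{Beck07}. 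Substituting the explicit value of $\poly_{\{a,b,c\}}(0)=\frac{3(ab+ac+bc)+a^2+b^2+c^2}{12abc}$ and equating to $1$ then yields, after transposing, a linear relation among the three Dedekind sums and the elementary fractions.

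The final step is purely bookkeeping: one solves the resulting identity for $s(bc',a)+s(ca',b)+s(ab',c)$ and simplifies the elementary terms. Concretely, from $p_{\{a,b,c\}}(0)=1$ I would obtain
\begin{equation*}
s(bc',a)+s(ca',b)+s(ab',c)=\frac{3(ab+ac+bc)+a^2+b^2+c^2}{12abc}+\frac{a-1}{4a}+\frac{b-1}{4b}+\frac{c-1}{4c}-1,
\end{equation*}
and the remaining task is to verify that the right-hand side collapses to $-\frac14+\frac{a^2+b^2+c^2}{12abc}$. This is an elementary algebraic simplification: the three terms $\frac{a-1}{4a}=\frac14-\frac1{4a}$ sum to $\frac34-\frac14\bigl(\frac1a+\frac1b+\frac1c\bigr)$, and the $\frac{3(ab+ac+bc)}{12abc}=\frac14\bigl(\frac1a+\frac1b+\frac1c\bigr)$ part of $\poly_{\{a,b,c\}}(0)$ exactly cancels these fractional contributions, leaving $\frac34-1+\frac{a^2+b^2+c^2}{12abc}=-\frac14+\frac{a^2+b^2+c^2}{12abc}$, as required.

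\textbf{Main obstacle.} I expect the only genuine difficulty to be the identification in the first two steps: matching each root-of-unity sum in~\eqref{ecpw*} with the correct Dedekind sum $s(bc',a)$, $s(ca',b)$, $s(ab',c)$, including getting the modular inverses $a',b',c'$ and their cyclic roles right, and tracking signs through $s(-a,b)=-s(a,b)$ and the invariance $s(a',b)=s(a,b)$. Once that dictionary is fixed, the evaluation at $t=0$ and the final simplification are routine. An alternative, fully self-contained route would avoid $p_{\{a,b,c\}}$ altogether and derive the three-term law directly from Theorem~\ref{prop4} by a change-of-variables argument relating $s(bc',a)$ to $s(a,bc)$-type sums; but leveraging the already-established value $L_{\Delta_2}(0)=1$ gives the shortest path and reuses the machinery stated immediately above.
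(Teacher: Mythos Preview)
The paper does not supply its own proof of Proposition~\ref{prop5}; it is quoted from \cite{Beck07,Rad72}. Your argument is correct, and in fact it is exactly the computation the paper sketches a few lines later as Corollary~\ref{radrec}: substituting $t=0$ into $p_{\{a,b,c\}}(t)=L_{\Delta_2}(t)$ and using $L_{\Delta_2}(0)=1$ yields the three-term identity for the Fourier--Dedekind sums $s_0(\cdot,\cdot;\cdot)$, and your extra step of converting each $s_0$ back to an ordinary Dedekind sum via the substitution $j\equiv kc\bmod a$ (etc.) together with Proposition~\ref{prop3}---equivalently Remark~\ref{remDed}(4)---recovers Proposition~\ref{prop5} verbatim. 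The identification you flagged as the ``main obstacle'' is right on the nose: the change of index $k\mapsto kc'\bmod a$ turns $(1-\xi_a^{kb})(1-\xi_a^{kc})$ into $(1-\xi_a^{jbc'})(1-\xi_a^{j})$, giving $\frac{a-1}{4a}-s(bc',a)$, and cyclically for the other two sums; the final algebraic simplification is as you wrote.
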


\begin{dfn}[\cite{Beck07}]\label{FDS}
 Let $a_1,\ldots,a_m, n\in \ZZ$, $b\in\Z_{>0}$, then the Fourier-Dedekind sum is
defined as follows:
$$s_n(a_1,\ldots,a_m;b):= \frac{1}{b}\sum_{k=1}^{b-1}\frac{\xi_
b^{kn}}{(1-\xi_b^{ka_1})(1-\xi_b^{ka_2})\cdots(1-\xi_b^{ka_m})}.$$
\end{dfn}

Let us see some interesting properties of these sums.

\begin{remark}[\cite{Beck07}]\label{remDed}
Let $a,b,c\in \ZZ$ then

 \begin{enumerate}
\item  For all $n\in\ZZ$, $s_n(a,b;1)=0.$
\item  For all $n\in\ZZ$, $s_n(a,b;c)=s_n(b,a;c).$
\item One has $s_0(a,1;b)=-s(a,b)+\frac{b-1}{4b}.$
\item If we denote by $a'$ the inverse of $a$ modulo $c$, then $s_0(a,b;c)=
-s(a'b,c)+\frac{c-1}{4c}.$
 \end{enumerate}
\end{remark}

With this notation we can express (\ref{ecpw*}) and (\ref{ecpw1*}) as follows
\begin{equation}\label{ecpw1}
 p_{\{a,b,1\}}(t)= \poly_{\{a,1,b\}}(t)+s_{-t}(a,1;b)+s_{-t}(1,b;a).
\end{equation}
\begin{equation}\label{ecpw}
 p_{\{a,b,c\}}(t)=
\poly_{\{a,b,c\}}(t)+s_{-t}(a,b;c)+s_{-t}(b,c;a)+s_{-t}(a,c;b).
\end{equation}

 As a consequence of Zagier reciprocity in dimension $3$ (see \cite[Theorem
$8.4$]{Beck07}) one has the following result.
\begin{cor}[Rademacher's Reciprocity Law, \cite{Beck07}]\label{radrec}

Substituting $t=0$ in the previous expression one gets
$$1-\poly_{\{a,b,c\}}(0)=s_{0}(a,b;c)+s_{0}(c,b;a)+s_{0}(a,c;b)=-\frac{1}{4}
+\frac{a^2+b^2+c^2}{12abc}.$$
\end{cor}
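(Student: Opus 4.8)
The plan is to read both equalities off the degree-two Fourier--Dedekind expression~\eqref{ecpw} by specializing $t=0$, treating the two ``$=$'' signs as two independent tasks: the left equality is purely combinatorial (a lattice-point count), while the right equality is the arithmetic reciprocity proper.

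For the left equality I would set $t=0$ in~\eqref{ecpw}, obtaining
\[
p_{\{a,b,c\}}(0)=\poly_{\{a,b,c\}}(0)+s_{0}(a,b;c)+s_{0}(b,c;a)+s_{0}(a,c;b).
\]
By the counting theorem of \S\ref{dedekind_sec}, $p_{\{a,b,c\}}(t)=L_{\Delta_2}(t)$ is the number of points of $\Delta_2=\{(x,y,z)\in\Z_{\geq 0}^3\mid ax+by+cz=t\}$. Since $a,b,c>0$, the equation $ax+by+cz=0$ has the single non-negative solution $(0,0,0)$, so $L_{\Delta_2}(0)=1$ and hence $p_{\{a,b,c\}}(0)=1$. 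Rearranging yields $1-\poly_{\{a,b,c\}}(0)=s_{0}(a,b;c)+s_{0}(b,c;a)+s_{0}(a,c;b)$, and a final appeal to the symmetry $s_{0}(b,c;a)=s_{0}(c,b;a)$ from Remark~\ref{remDed}(2) rewrites the middle term exactly as displayed in the statement.

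For the right equality I would convert each Fourier--Dedekind sum into a classical Dedekind sum and then invoke the three-term reciprocity of Proposition~\ref{prop5}. Concretely, Remark~\ref{remDed}(4) turns each $s_0(\cdot,\cdot;m)$ into $-s(\cdot,m)+\tfrac{m-1}{4m}$, where the first argument is to be read modulo $m$ through the appropriate modular inverse; using the identity $s(a',b)=s(a,b)$ for inverse residues I would align these three Dedekind sums with the triple $s(bc',a)$, $s(ca',b)$, $s(ab',c)$ appearing in Proposition~\ref{prop5}. Substituting that proposition's closed value and collecting the accumulated constants $\tfrac{m-1}{4m}$ then produces the right-hand side. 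Alternatively, this second equality is precisely Zagier's three-dimensional reciprocity, so it may simply be quoted from~\cite[Theorem~8.4]{Beck07} rather than reassembled from Proposition~\ref{prop5}.

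The main obstacle is entirely bookkeeping rather than conceptual: one must pin down the three modular inverses (inverse of $a$ modulo $c$, of $c$ modulo $a$, and of $a$ modulo $b$) so that the converted Dedekind sums match the exact index pattern of Proposition~\ref{prop5}, and then carefully reconcile the total constant contribution $\sum\tfrac{m-1}{4m}$ against $\poly_{\{a,b,c\}}(0)$. This constant-term reconciliation is the delicate point, since a single index mismatch or a dropped $\tfrac{m-1}{4m}$ summand is exactly what would corrupt the final closed form; I would therefore verify the whole chain numerically on a small pairwise-coprime triple such as $(a,b,c)=(2,3,5)$ before trusting the general manipulation.
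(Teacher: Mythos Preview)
Your proposal is correct and matches the paper's own treatment: the paper presents this corollary without an independent proof, simply stating that it follows by substituting $t=0$ in~\eqref{ecpw} and invoking Zagier's three-dimensional reciprocity \cite[Theorem~8.4]{Beck07}. Your derivation of the left equality via $p_{\{a,b,c\}}(0)=L_{\Delta_2}(0)=1$ is exactly the intended reading, and for the right equality your option of quoting Zagier directly is precisely what the paper does; your alternative route through Remark~\ref{remDed}(4) and Proposition~\ref{prop5} is a legitimate, slightly more explicit reassembly of the same identity.
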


\section{Local algebraic invariants on quotient singularities}\label{secdelta}

In this section we study two local invariants of a curve in a
$V$-surface, the delta invariant $\delta_P(\cC)$ and the dimension $\K_P(\cC)$
(see Definitions~\ref{deltaintro} and \ref{condicionesnul}) and a local
invariant of the surface, $\Diff_P(k)$ (see
Definition~\ref{defDELTA}), as well as the relation among them (see
Theorem~\ref{thmDELTAJ}), so as to understand the right-hand side of the
formula in Theorem~\ref{AdjFor}.

In \cite{CMO12, Ortigas13PhD} we started extending the concept of Milnor fiber
and Milnor number
of a curve singularity 
allowing the ambient space to be a quotient surface singularity. A generalization of the local 
$\delta$-invariant is also defined and described in terms of a $\bQ$-resolution of the curve singularity. 
All these tools allow for an explicit description of the genus formula of a curve defined on a weighted projective 
plane in terms of its degree and the local type of its singularities.

\begin{dfn}[\cite{CMO12}]
Let $\cC=\{f=0\} \subset X(d;a,b)$ be a curve germ. 
The \emph{Milnor fiber} $f_t^\w$ of $(\cC,[0])$ is defined as follows,
$$f_t^\w:=\{ f=t\}/{\qa_d}.$$
The \emph{Milnor number} $\mu^\w$ of $(\cC,P)$ is defined as follows,
$$\mu^\w:=1-\chi^{\orb}(f_t^\w).$$
\end{dfn}

We recall that $\chi^{\orb}(O):=\frac{1}{|G|}\sum_{\Delta} (-1)^{\dim \Delta} |G_\Delta|$
for an orbifold $O$ with a finite $CW$-complex structure given by the cells $\Delta$ and the finite group $G$ acting on it, 
where $G_\Delta$ denotes the stabilizer of~$\Delta$.

Note that alternative generalizations of Milnor numbers can be found, for instance, 
in~\cite{ABLM-milnor-number,brasselet-milnor,Le-Someremarks,STV-Milnornumbers}.
The one proposed here seems more natural for quotient singularities, 
but more importantly, it allows for the existence of an explicit formula relating Milnor number, $\delta$-invariant, 
and genus of a curve on a singular surface.

We define the local invariant $\delta$ for curve singularities on $X(d;a,b)$.

\begin{dfn}[\cite{CMO12}]\label{deltaintro}
Let $\cC$ be a reduced curve germ at $[0]\in X(d;a,b)$, then we define 
$\delta$ (or $\delta_0(\cC)$) as the number verifying 
\begin{equation*}
\chi^{\orb}(f_t^\w)=r^\w - 2 \delta,
\end{equation*}
where $r^\w$ is the number of irreducible branches of $\cC$ at $[0]$.
\end{dfn}

\begin{remark}
Note that the $\delta$-invariant of a reduced curve (i.e. a reduced $\QQ$-divisor) on a surface with quotient 
singularity is not an integer number in general, but rather a rational number (see~\cite[Example~4.6]{CMO12}). 
However, in the case when $\cC$ is in fact Cartier, $\delta_0(\cC)$ is an integer number that has an 
interpretation as the dimension of the quotient $\bar R/R$ where $R$ is the coordinate ring of $\cC$ and 
$\bar R$ its normalization (see~\cite[Theorem~4.14]{CMO12}).
An alternative definition of $\delta_0(\cC)$ on normal surfaces in terms of a resolution can be 
found in~\cite{Blache-RiemannRoch}. 

Also note that $r^\w$ can also be seen as the number of irreducible $k$-invariant factors 
of the defining equation $f$. For instance, the germ defined by $f=(x^2-y^4)$ in $\CC^2$ is not irreducible
since $(x^2-y^4)=(x-y^2)(x+y^2)$. However, $f=0$ also defines a set of zeroes in $X(2;1,1)$, which is 
irreducible (and hence $r^\w=1$), since $(x-y^2)$ and $(x+y^2)$ are not $k$-invariant for any $k$
(recall Definition~\ref{deforbi}).
\end{remark}

A recurrent formula for $\delta$ based on a $\Q$-resolution
of the singularity is provided in Theorem~\ref{thm-delta}.

Assume $(f,0)\subset X(d;a,b)$ and consider 
a $(p,q)$-blow-up $\pi$ at the origin. Denote by $\nu_0(f)$ the $(p,q)$-multiplicity 
of $f$ at $0$ and $e:=\gcd(d,pb-qa)$. As an interpretation of $\nu=\nu_0(f)$, we recall that 
$\pi^*(C)=\hat C+\frac{\nu}{e} E$, where $\pi^*(C)$ is the total transform of $C$,
$\hat C$ is its strict transform and $E$ is the exceptional divisor.

We will use the following notation:
\begin{equation}\label{eq-deltawpi}
 \delta_{0,\pi}(f)=\frac{\nu_0(f)}{2dpq}\left(\nu_0(f)-p-q+e \right),
\end{equation}

\begin{thm}[\cite{CMO12}]
\label{thm-delta}
Let $(C,[0])$ be a curve germ on an abelian quotient surface singularity. Then
\begin{equation}
\label{eq-delta}
\delta(C)=\sum_{Q\prec {[0]}} \delta_{Q,\pi_{(p,q)}}(f)
\end{equation}
where $Q$ runs over all the infinitely near points of a $\Q$-resolution of
$(C,{[0]})$ and
$\pi_{(p,q)}$ is a $(p,q)$-blow-up of $Q$, the origin of $X(d;a,b)$.
\end{thm}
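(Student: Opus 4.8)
The plan is to reduce the formula to the effect of a single weighted blow-up and then to induct on the number of point blow-ups in the chosen $\Q$-resolution of $(C,[0])$. Write $\pi=\pi_{(p,q)}$ for the first of these blow-ups, $E\cong\PP^1_\w(d;a,b)$ for its exceptional divisor and $\hat C$ for the strict transform of $C$. The key step is the one-step relation
$$\delta(C)=\delta_{0,\pi}(f)+\sum_{Q\in\hat C\cap E}\delta_Q(\hat C).$$
Granting it, the induction closes immediately: the inductive hypothesis applied to each germ $(\hat C,Q)$ — whose $\Q$-resolution is strictly shorter — gives $\delta_Q(\hat C)=\sum_{Q'\prec Q}\delta_{Q',\pi_{(p',q')}}(\hat f)$, and since the infinitely near points of $(C,[0])$ are exactly $[0]$ together with the infinitely near points of the various $(\hat C,Q)$, substitution yields \eqref{eq-delta}. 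The shortest resolutions — those with $\hat C$ already smooth along $E$, so that the sum above is empty — form the base of the induction and are also covered by the computation below.

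The first move is to rewrite $\delta$ in classical terms. As $f\in\O_{X,P}(k)$ is $k$-invariant, $\qa_d$ carries $\{f=t\}$ to $\{f=\xi_d^kt\}$, so with $e'=\gcd(d,k)$ the orbifold Milnor fibre is $f_t^\w=Y_t/\qa_d$, where $Y_t=\{(x,y):f(x,y)^{d/e'}=t^{d/e'}\}$ is a disjoint union of $d/e'$ copies of the ordinary Milnor fibre $F_f$ of $f\in\C\{x,y\}$; for $0<|t|\ll1$ the group $\qa_d$ acts \emph{freely} on $Y_t$ (its only fixed point in $\C^2$, the origin, lies off $Y_t$). Consequently
$$\chi^{\orb}(f_t^\w)=\tfrac1d\,\chi(Y_t)=\tfrac1{e'}\,\chi(F_f)=\tfrac{1-\mu_0(f)}{e'},$$
$\mu_0$ being the ordinary Milnor number, whence $\delta(C)=\tfrac12\bigl(r^\w-\tfrac1{e'}(1-\mu_0(f))\bigr)$ by Definition~\ref{deltaintro}, and likewise at every $Q$. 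The invariant branches of $(C,[0])$ being in bijection with those of $\hat C$ at the points of $\hat C\cap E$, one has $r^\w=\sum_{Q\in\hat C\cap E}r^\w_Q$; substituting everything into the one-step relation turns it into the purely numerical identity
$$\frac{\chi(F_f)}{e'}=\sum_{Q\in\hat C\cap E}\frac{\chi(F_{\hat f_Q})}{e'(Q)}-\frac{\nu(\nu-p-q+e)}{d\,p\,q},\qquad \nu=\nu_0(f),$$
relating the classical topology of the plane-curve germs $f$, $\hat f_Q$ to the arithmetic of the blow-up.

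To prove this identity I would pull everything back through the cyclic cover $\rho\colon(\C^2,0)\to(X(d;a,b),[0])$: there $C$ becomes a reduced plane-curve germ $\tilde C=\rho^{-1}(C)$, and $\pi$ is induced by the $(p,q)$-weighted blow-up of $(\C^2,0)$, whose exceptional curve covers $E$ with its two torus-fixed points lying over the two cyclic quotient points of $E$, of types $X(\tfrac{pd}{e};1,\ast)$ and $X(\tfrac{qd}{e};1,\ast)$ as in \eqref{eq-charts}. On $\C^2$ the identity is the case $d=1$ of the statement being proved — equivalently, the classical recursion $\delta(\tilde C)=\binom{\nu'}{2}+\sum_Q\delta_Q(\widehat{\tilde C})$ for an ordinary blow-up together with its weighted refinement (both standard, e.g.\ via A'Campo's formula or the conductor ideal). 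Transporting it down to $X(d;a,b)$ amounts to dividing each Euler characteristic by the covering degree of its source — $e'$, respectively $e'(Q)$, with the degrees along $E$ dictated by the chart orders $d$, $\tfrac{pd}{e}$, $\tfrac{qd}{e}$ — which reproduces the displayed identity up to a single additive constant.

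The hard part, and the step I expect to be the real obstacle, is identifying that constant: one must follow with care how the normalization procedure for types $(d;a,b)$ and the quantity $e=\gcd(d,pb-qa)$ control the local covering degrees along $E$ and at the points of $\hat C\cap E$, so that the classical correction $\binom{\nu}{2}$ on $\C^2$, once every Euler characteristic has been divided by its covering degree, collapses precisely to $\nu(\nu-p-q+e)/(2dpq)=\delta_{0,\pi}(f)$, as in \eqref{eq-deltawpi}. This is the point where the arithmetic of weighted blow-ups — the appearance of $e$, and the behaviour of greatest common divisors under normalization — enters essentially; everything else (the induction, the free-quotient reduction, and the passage to plane curves) is formal.
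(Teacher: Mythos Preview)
The paper does not prove Theorem~\ref{thm-delta}: it is quoted from \cite{CMO12} and used as a black box, so there is no proof in this paper to compare your proposal against.

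On the proposal itself: the inductive skeleton and the reduction to a one-step recursion $\delta(C)=\delta_{0,\pi}(f)+\sum_{Q\in\hat C\cap E}\delta_Q(\hat C)$ are the right shape, and your identification $\chi^{\orb}(f_t^\w)=\chi(F_f)/e'$ via the free action of the stabiliser $\qa_{e'}\subset\qa_d$ on the ordinary Milnor fibre (in the normalized case) is correct. But the step you yourself flag as the obstacle --- transporting the classical plane-curve recursion through the cyclic cover and tracking how the local covering degrees at the chart origins of type $X(pd/e;1,\ast)$ and $X(qd/e;1,\ast)$ and the quantity $e=\gcd(d,pb-qa)$ conspire so that the correction term collapses exactly to $\nu(\nu-p-q+e)/(2dpq)$ --- is genuinely where all the content lies, and you have not carried it out. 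As written, this is a plausible strategy rather than a proof: the ``up to a single additive constant'' and ``I expect to be the real obstacle'' phrases mark precisely the computation that \cite{CMO12} has to perform, and nothing in your text substitutes for it. A route that tends to make the arithmetic more transparent is to avoid the cover entirely and compute $\chi^{\orb}$ of the orbifold Milnor fibre directly on $\widehat{X(d;a,b)}_\w$ by additivity over the stratification by $E$, $\hat C\cap E$, and the two cyclic points of $E$; the factor $e$ in \eqref{eq-deltawpi} then appears naturally from the orbifold Euler characteristic of~$E$.
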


\subsection{Logarithmic Modules}\label{secccondiciones}
\mbox{}

For a given $k\geq0$, one has the module $\O_P(k)$ of $k$-invariant germs (see Definition~\ref{deforbi}),
$$\O_P(k):=\{ h \in \C\{x,y\}|\ h(\xi_d^ax,\xi_d^by)=\xi_d^k h(x,y)\}.$$
Let $\{f=0\}$ be a germ in $X(d;a,b)$. Note that if $f\in \O_P(k)$, then one
has the following  $\O_P$-module
 $$\O_P(k-a-b)=\{ h \in \C\{x,y\}\mid\ h \frac{dx\wedge dy}{f} \text{ is }
\qa_{d} \text {-invariant}\}.$$

\begin{dfn} \label{modulitos1}
Let $\cD=\{f=0\}$ be a germ in $P\in X(d;a,b)$, where $f\in \O_P(k)$. 

\begin{enumerate}
\item
Let ${\mathcal M}^{\logres}_{\cD}$ denote the submodule of $\O_P(k-a-b)$
consisting of all $h \in \O_P(k-a-b)$ such that the 2-form
\begin{equation*}
\omega=h \ \frac{dx \wedge dy}{f}\in \Omega^2_X(\logres \langle D\rangle)
\end{equation*}
(recall Notation~\ref{not-logres}).

\item
Let ${\mathcal M}^{\nul}_{\cD}$ denote the submodule of ${\mathcal M}^{\logres}_{\cD}$
consisting of all $h \in {\mathcal M}^{\logres}_{\cD}$ such that the 2-form
\begin{equation*}
\omega=h \ \frac{dx \wedge dy}{f}\in \Omega^2_X(\logres \langle D\rangle)
\end{equation*}
admits a holomorphic extension outside the strict transform~$\widehat{f}$.
\item Any $\O_P$-module $\mathcal{M}\subseteq{\mathcal M}^{\logres}_{\cD}$ will be
called \emph{logarithmic module}.
\end{enumerate}
\end{dfn}

\begin{dfn}\label{condicionesnul}
Let $\cD=\{f=0\}$ be a germ in $P\in X(d;a,b)$. Let us define the following dimension,
$$\K_P(\cD)=\K_P(f):=\dim_{\C} \frac{\O_P(s)}{{\mathcal M}^{\nul}_{\cD}},$$
for $s=\deg f-a-b$.
\end{dfn}

\begin{remark}
From the discussion in \S\ref{resvmanifolds} note that $\K_P(f)$ turns out to be a finite number 
independent on the chosen $\Q$-resolution. Intuitively, the number $\K_P(f)$ provides the minimal 
number of conditions required for a generic germ $h\in \O_P(s)$ so that $h\in {\mathcal M}^{\nul}_{\cD}$.
\end{remark}

\begin{remark}\label{condicionesclasico}
It is known (see \cite[Chapter $2$]{JIphd}) that if $f$ is a holomorphic germ in $(\C^2,0)$, then $$\K_0(f)=\delta_0(f).$$
\end{remark}

\subsection{The \texorpdfstring{$\delta$}{delta} invariant in the general case of local germs}
\mbox{}

Let us start with the following constructive result which allows one to see any singularity 
on the quotient $X(d;a,b)$ as the strict transform of some $\{g=0\} \subset \C^2$ after 
performing a certain weighted blow-up.

\begin{remark}\label{weierstrass_remark}
The Weierstrass division theorem states that given $f,g \in \C\{x,y\}$ with $f$ $y$-general of order $k$, 
there exist $q \in \C\{x,y\}$ and $r \in \C\{x\}[y]$ of degree in $y$ less than or equal to $k-1$, both 
uniquely determined by $f$ and $g$, such that $g = q f + r$. The uniqueness and the linearity of the action 
ensure that the division can be performed equivariantly for the action of $\qa_d$ on $\C\{x,y\}$ (see~\eqref{eq-descomp-cxy}), 
i.e.~if $f,g\in \O(l)$, then so are $q$ and~$r$. In other words, the Weierstrass preparation theorem still holds for zero 
sets in~$\C\{x,y\}^{\qa_d}$.
\end{remark}

Let $\{f=0\}\subset (X(d;a,b),0)$ be a reduced analytic germ. Assume $(d;a,b)$ is a normalized type. 
After a suitable change of coordinates of the form $X(d;a,b)\to X(d;a,b)$, 
$[(x,y)]\mapsto [(x+\lambda y^k,y)]$ where $ bk\equiv a \mod d$, one can assume $x\nmid f$. 
Moreover, by Remark~\ref{weierstrass_remark}, $f$ can be written in the form
\begin{equation}\label{special_form2}
f(x,y) = y^r + \sum_{i> 0, \ j<r} a_{ij} x^i y^j \ \in \ \C\{x\}[y]\cap \cO(k).
\end{equation}
For technical reasons, in the following results the space $X(d;a,b)$ will be considered to be of type
$X(p;-1,q)$. Note that this is always possible.

\begin{lemma}\label{lema1b}
Let $f \in \O(k)$ define an analytic germ on $X(p;-1,q)$, $\gcd(p,q)=1$, 
such that $x \nmid f$. Then there exist $g \in \C\{x,y\}$ with $x \nmid g$
such that $g(x^p,x^q y) = x^{qr} f(x,y)$.
 Moreover, $f$ is reduced (resp.~irreducible) 
if and only if $g$~is.
\end{lemma}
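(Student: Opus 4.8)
The plan is to produce $g$ by an explicit construction that inverts the substitution $(x,y)\mapsto(x^p,x^qy)$, and then to read off the reducedness/irreducibility statement from a bijection between factorizations.

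\emph{Existence.} By the standing assumption I would take $f$ in the Weierstrass form~\eqref{special_form2}, that is
$$f(x,y)=y^r+\sum_{i>0,\,j<r}a_{ij}\,x^iy^j.$$
Since $f\in\O(k)$ on $X(p;-1,q)$, every monomial $x^iy^j$ occurring in $f$ satisfies $-i+qj\equiv k\pmod p$; applying this to the leading term $y^r$ gives $qr\equiv k\pmod p$, and subtracting the two congruences yields
$$i+q(r-j)\equiv 0\pmod p\qquad\text{for every }(i,j)\text{ with }a_{ij}\neq0.$$
Hence each $\alpha_{ij}:=\bigl(i+q(r-j)\bigr)/p$ is a nonnegative integer, strictly positive whenever $i>0$. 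I would then set
$$g(u,v):=v^r+\sum_{i>0,\,j<r}a_{ij}\,u^{\alpha_{ij}}v^j,$$
and substituting $u=x^p$, $v=x^qy$ together with $p\alpha_{ij}+qj=i+qr$ gives $g(x^p,x^qy)=x^{qr}f(x,y)$, as required. Convergence of $g$ holds because for each fixed $j<r$ the map $i\mapsto\alpha_{ij}$ is affine (increasing by $1$ as $i$ increases by $p$), so $\sum_i a_{ij}u^{\alpha_{ij}}$ has positive radius of convergence whenever $\sum_i a_{ij}x^i$ does; and $x\nmid g$ (that is $u\nmid g$) because the unique monomial of $u$-degree $0$ is $v^r$.

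\emph{Setup for the factorization.} The substitution $\sigma\colon\C\{u,v\}\to\C\{x,y\}$ with $\sigma(u)=x^p$, $\sigma(v)=x^qy$ is an injective ring homomorphism: distinct monomials $u^\alpha v^\beta$ have distinct images $x^{p\alpha+q\beta}y^\beta$, since the pair $(p\alpha+q\beta,\beta)$ recovers $(\alpha,\beta)$. For any $h\in\C\{u,v\}$ one may write $\sigma(h)=x^{\nu}\hat h$ with $x\nmid\hat h$ and $\nu=\nu_{(p,q)}(h)$ the $(p,q)$-multiplicity; here $\hat h$ is the strict transform of $\{h=0\}$ under the $(p,q)$-blow-up, it is $\qa_p$-semi-invariant, and $\nu$ is additive under multiplication. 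In particular $\nu_{(p,q)}(g)=qr$, attained only at the term $v^r$, which identifies the exponent $qr$ geometrically.

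\emph{Reduced and irreducible.} I would establish a bijection, mutually inverse up to units, between factorizations $g=g_1g_2$ into non-units in $\C\{u,v\}$ and factorizations $f=f_1f_2$ into non-unit $\qa_p$-semi-invariant germs. Given $g=g_1g_2$, one has $u\nmid g_i$ (as $u\nmid g$), so $\sigma(g_i)=x^{\nu_i}f_i$ with $f_i$ semi-invariant non-units and $\nu_1+\nu_2=qr$; multiplicativity of $\sigma$ forces $f=f_1f_2$. Conversely, given $f=f_1f_2$ with $f_i$ semi-invariant non-units and $x\nmid f_i$ (which one may take distinguished in $y$), the construction of the existence step applied to each $f_i$ yields $g_i$ with $\sigma(g_i)=x^{qr_i}f_i$, where $r_i=\deg_y f_i$ and $r_1+r_2=r$; then $\sigma(g_1g_2)=x^{qr}f=\sigma(g)$, and injectivity of $\sigma$ gives $g=g_1g_2$. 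Since, by the remark following Definition~\ref{deltaintro}, the orbifold branch number $r^\w$ of $f$ equals the number of its irreducible semi-invariant factors, this bijection shows that $g$ is irreducible iff $f$ is, and that repeated factors correspond on both sides, so $g$ is reduced iff $f$ is. The only delicate point — the expected main obstacle — is precisely this reverse direction, where one must run the explicit construction on each semi-invariant factor and invoke the injectivity of $\sigma$ to descend the factorization; the remaining steps are direct computations.
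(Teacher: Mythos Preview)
Your existence argument is essentially identical to the paper's: put $f$ in the Weierstrass form~\eqref{special_form2}, use the congruence $-i+qj\equiv qr\pmod p$ to see that $(i+q(r-j))/p$ is a nonnegative integer, and define $g$ by replacing each exponent accordingly. The paper records only the construction and the verification $g(x^p,x^qy)=x^{qr}f(x,y)$; your added remarks on convergence and on $u\nmid g$ are correct and welcome.

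For the reduced/irreducible claim the paper is extremely terse: it simply notes that the strict transform passes only through the origin of the first chart of the $(p,q)$-blow-up, i.e.\ the construction above exhibits $\{f=0\}$ as the strict transform of $\{g=0\}$ under~$\pi_{(p,q)}$. Since $x\nmid f$ and $u\nmid g$, the map is an isomorphism between $\{g=0\}$ and $\{f=0\}$ away from the exceptional locus, and reducedness/irreducibility are birational notions. Your route is instead algebraic: you set up the injective substitution $\sigma$ and build a bijection between non-unit factorizations of $g$ in $\C\{u,v\}$ and semi-invariant non-unit factorizations of $f$. This is a genuine alternative and is more self-contained; it also makes transparent why the relevant notion of irreducibility on the $f$-side is ``irreducible among semi-invariant germs'', matching the orbifold branch count. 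One small point worth making explicit in your write-up: when you pass from a non-unit factor $g_i$ to its strict transform $f_i$, the fact that $f_i$ is again a \emph{non-unit} uses your observation that the $(p,q)$-initial form of $g$ is the single monomial $v^r$, so the initial form of each $g_i$ is $c_iv^{r_i}$ with $r_i>0$, forcing $f_i(0,0)=0$.
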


\begin{proof} 
By the discussion after Remark~\ref{weierstrass_remark} one can assume $f\in \C\{x\}[y]$ as in~\eqref{special_form2}.
We have  $-i+qj\equiv qr\equiv k  \mod p$ for all $i,j$ so  $p| (i+q(r-j))$ and $i+q(r-j)>0$. Consider
$$
g(x,y) =y^r+ \sum_{i> 0, \ j<r} a_{ij} x^{\frac{i+q(r-j)}{p}} y^j \ \in \ \C\{x\}[y],
$$
$$
g(x^p,x^qy) =x^{qr}y^r+ \sum_{i> 0, \ j<r} a_{ij} x^{i+qr} y^j=x^{qr}\left(y^r+ \sum_{i> 0, \ j<r} a_{ij} x^i y^j\right).
$$
Note that the strict transform passes only through the origin of the first chart.
\end{proof}

The following Proposition \ref{proppick}  will be useful to give a generalization of Remark~\ref{condicionesclasico}. 

Before we state the result we need some notation. Given $r,p,q\in \Z_{>0}$ we
define the following combinatorial number 
which generalizes $\binom{d}{2}$:
\begin{equation}\label{eq-delta-comb}
\delta^{(p,q)}_{r}:= \frac{r (qr-p-q+1)}{2p}.
\end{equation}
Note that $\binom{d}{2} = \delta^{(1,1)}_d$.

\begin{prop}\label{proppick}
 Let be $p,q,a,r \in \Z_{>0}$ with $\gcd(p,q)=1$ and $r_1=r+pa$. Consider the following cardinal,
$$A^{(p,q)}_r:= \#\{ (i,j)\in \Z^2|\  pi+qj\leq qr;\ i,j\geq 1 \}.$$ Then,

\begin{enumerate}
 \item[1)] If $r=pa$, one has $\delta_{r}^{(p,q)}=A_{r}^{(p,q)}$.
 \item[2)] The following equalities hold:
 \begin{equation}\label{3.2.1}
 \delta^{(p,q)}_{r_{1}}-\delta_{r}^{(p,q)}=\delta_{r_{1}-r}^{(p,q)}+aqr,
 \end{equation}
 \begin{equation}\label{3.2.2}
   A_{r_{1}}^{(p,q)}-A_{r}^{(p,q)}=A_{r_{1}-r}^{(p,q)}+aqr.
 \end{equation}
 \item[3)] The difference $A^{(p,q)}_r-\delta^{(p,q)}_r$ only depends on $r$
modulo $p$. 
\end{enumerate}
\end{prop}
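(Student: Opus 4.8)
The plan is to treat the three assertions in order, reducing everything to elementary lattice-point bookkeeping and the explicit quadratic $\delta^{(p,q)}_r = \frac{r(qr-p-q+1)}{2p}$. For (1), I would count the lattice points of $A^{(p,q)}_r$ with $r = pa$ directly: the set $\{(i,j) : pi + qj \le qpa,\ i,j \ge 1\}$ is, after the substitution $i \ge 1$, exactly a lattice triangle with legs parallel to the axes whose relevant vertex count is governed by $\gcd(p,q) = 1$. The key point is that when $p \mid r$ (i.e. $p \mid qr$ since $\gcd(p,q)=1$ would be needed, but here $r = pa$ makes $qr = pqa$ divisible by $p$), the hypotenuse $pi+qj = qr$ passes through lattice points in a controlled way, and one can apply Pick's theorem (or a direct summation $\sum_{i\ge 1} \#\{j \ge 1 : qj \le qr - pi\}$) to get a clean closed form. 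I expect this sum to telescope to precisely $\frac{r(qr-p-q+1)}{2p}$, matching $\delta^{(p,q)}_r$; the verification is a routine but slightly delicate computation with floor functions that all disappear because of the divisibility.

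For (2), equation~\eqref{3.2.1} is a purely formal identity: substitute $r_1 = r + pa$ into the quadratic $\delta^{(p,q)}_\bullet$ on both sides and expand. Writing $\delta^{(p,q)}_{r} = \frac{q}{2p}r^2 - \frac{p+q-1}{2p}r$, the difference $\delta^{(p,q)}_{r_1} - \delta^{(p,q)}_{r}$ is linear in the increment, and one checks term by term that it equals $\delta^{(p,q)}_{r_1 - r} + aqr = \delta^{(p,q)}_{pa} + aqr$; the cross-term $\frac{q}{2p}(2 r \cdot pa) = aqr$ is exactly the correction. For equation~\eqref{3.2.2}, I would argue geometrically: the region defining $A^{(p,q)}_{r_1}$ is the region for $A^{(p,q)}_{r_1 - r}$ translated/dilated, and the difference of the two lattice-point counts decomposes as the count over a sub-triangle (giving $A^{(p,q)}_{r_1-r}$, possibly after a shift $i \mapsto i + a$ or $j \mapsto j + \text{something}$ that preserves the lattice) plus the count over a parallelogram strip of lattice width $a$ and the appropriate length, which contributes exactly $aqr$ points. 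One has to be careful about boundary lattice points lying on $pi + qj = qr$ exactly, but $\gcd(p,q)=1$ controls these.

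Assertion (3) is then immediate by combining (1) and (2): fix any $r$ and write $r = pa + r_0$ with $0 \le r_0 < p$ (or more precisely reduce to a base case in the same residue class); subtracting~\eqref{3.2.2} from~\eqref{3.2.1} gives
\begin{equation*}
\bigl(A^{(p,q)}_{r_1} - \delta^{(p,q)}_{r_1}\bigr) - \bigl(A^{(p,q)}_{r} - \delta^{(p,q)}_{r}\bigr) = \bigl(A^{(p,q)}_{r_1 - r} - \delta^{(p,q)}_{r_1 - r}\bigr),
\end{equation*}
and since $r_1 - r = pa \equiv 0 \pmod p$, part (1) forces the right-hand side to vanish. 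Hence $A^{(p,q)}_r - \delta^{(p,q)}_r = A^{(p,q)}_{r_1} - \delta^{(p,q)}_{r_1}$ whenever $r_1 \equiv r \pmod p$, which is the claim. The main obstacle I anticipate is part (1): getting the floor-function sum to collapse exactly, and in particular handling the lattice points that sit on the slanted boundary line, requires care — everything downstream is formal manipulation or a clean geometric decomposition once (1) is in hand.
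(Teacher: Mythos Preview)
Your plan is correct and matches the paper's proof essentially step for step: Pick's theorem for (1), direct algebraic expansion for \eqref{3.2.1}, a lattice-preserving translation decomposition for \eqref{3.2.2} (the paper shifts $i \mapsto i + aq$ to embed $A^{(p,q)}_r$ and $j \mapsto j + r$ to embed $A^{(p,q)}_{r_1-r}$ inside the big triangle, leaving an $aq \times r$ rectangle with exactly $aqr$ lattice points), and subtraction combined with (1) for (3). The only point you spell out that the paper leaves implicit is the appeal to (1) to kill the right-hand side in the proof of~(3).
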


\begin{proof}

\hspace{1cm}
 \begin{enumerate}

\item [1)] To prove this fact it is enough to apply Pick's Theorem (see for example \cite[\S$2.6$]{Beck07}) noticing that the number of points on the diagonal without counting the ones in the axes is $a-1$.

Finally one gets,
$$A_{pa}^{(p,q)}=\frac{a(pqa-p-q+1)}{2}=\delta^{(p,q)}_{pa}.$$
\item[2)] Proving equation (\ref{3.2.1}) is a simple and direct computation. 
To prove equation (\ref{3.2.2}), let us describe $A^{(p,q)}_{r_{1}}$, $A^{(p,q)}_r$ and $ A^{(p,q)}_{r_{1}-r}$:

\begin{equation*}
 A^{(p,q)}_{r_{1}}= \# \{ (i,j)\in\Z^2|\ pi+qj\leq qr+pqa;\ i,j\geq 1 \}.
\end{equation*}
\begin{figure}[h]
\begin{center}
 \includegraphics[scale=0.7]{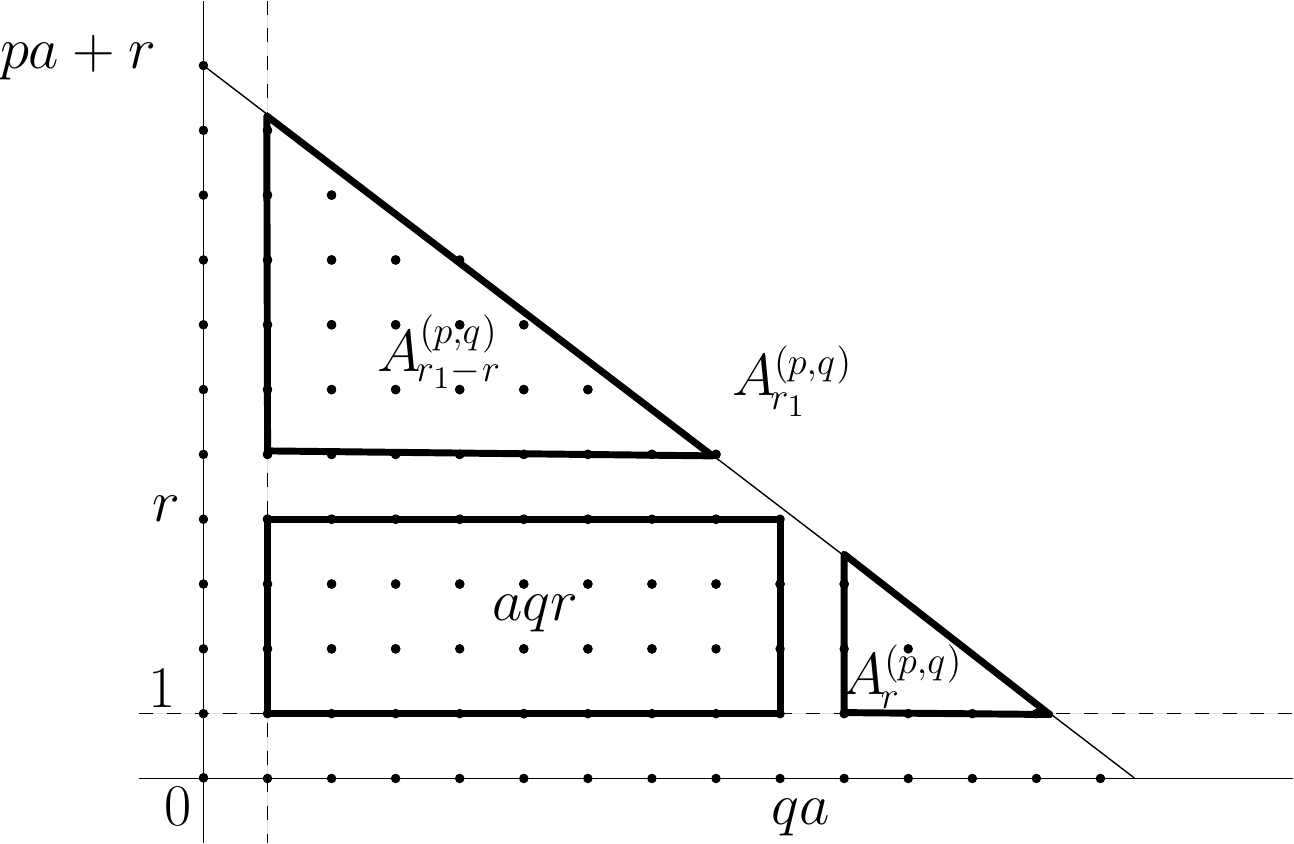}
\caption{}
\label{pick2fig}
\end{center}
\end{figure}
\begin{eqnarray*}
 A^{(p,q)}_r &= &\# \{ (i,j)\in\Z^2|\ pi+qj\leq qr;\ i,j\geq 1 \}\\
&=&\# \{ (i,j)\in\Z^2|\ pi+qj\leq qr+apq-apq;\ i,j\geq 1 \}\\
&=&\# \{ (i,j)\in\Z^2|\  p(i+aq)+qj\leq qr_1;\ i\geq 1, \ j\geq1 \}\\
&=&\# \{ (i,j)\in\Z^2|\ pi+qj\leq qr_{1};\ i\geq aq+1, \ j\geq1 \}.
\end{eqnarray*}
\begin{eqnarray*}
 A^{(p,q)}_{r_{1}-r} &= &\# \{ (i,j)\in\Z^2|\  pi+qj\leq qr_{1}-qr;\ i,j\geq 1 \}\\
&=&\# \{ (i,j)\in\Z^2|\  pi+q(j+r)\leq qr_{1};\ i,j\geq 1 \}\\
&=&\# \{ (i,j)\in\Z^2|\  p+qj\leq qr_{1};\ i\geq 1, \ j\geq r+1 \}.
\end{eqnarray*}

Using the decomposition shown in Figure~\ref{pick2fig} the claim follows. 

\item[3)] Subtracting equations (\ref{3.2.1}) and (\ref{3.2.2}) the result
holds.
\end{enumerate}
\end{proof}

\begin{dfn}\label{defDELTA}
Let $k\geq 0$ and $P\in X(p;-1,q)=X$.
The \emph{ $\Diff_P(k)$-invariant} of $X$  is defined as follows
$$\Diff_P(k):= A^{(p,q)}_r-\delta^{(p,q)}_r,$$
where $r=q^{-1}k \mod p$.
\end{dfn}

As a result of Proposition~\ref{proppick} one has the following result.

\begin{thm}\label{teoremadeltaext}
Let $f_1,f_2\in \O(k)$ be two germs at $[0]\in X(d;a,b)$.
Then,
$$
\K_0(f_1)-\K_0(f_2)=\delta_0(f_1)-\delta_0(f_2).
$$
\end{thm}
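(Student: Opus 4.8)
The plan is to reduce the statement to a purely combinatorial identity that has already been packaged in Proposition~\ref{proppick}(3). The strategy is as follows. First I would use the constructive reduction provided by Lemma~\ref{lema1b}: after a change of coordinates of the form in~\eqref{special_form2} (so that $x\nmid f_i$), and after presenting $X(d;a,b)$ as $X(p;-1,q)$, each $f_i\in\O(k)$ with $x\nmid f_i$ is related to a germ $g_i\in\C\{x,y\}$ with $x\nmid g_i$ via $g_i(x^p,x^qy)=x^{qr_i}f_i(x,y)$, where $r_i$ is the $y$-order of $f_i$ and $qr_i\equiv k\bmod p$. In particular, the residue $r_i \bmod p$ is the same for $f_1$ and $f_2$, namely $r_i\equiv q^{-1}k\bmod p$; this is the common ``$r$'' appearing in Definition~\ref{defDELTA}.

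The second step is to express both $\delta_0(f_i)$ and $\K_0(f_i)$ in terms of data attached to the associated planar germ $g_i$. For the $\delta$-invariant, the recursive formula of Theorem~\ref{thm-delta} applied to the $(p,q)$-blow-up at the origin gives $\delta_0(f_i)$ as $\delta_{0,\pi_{(p,q)}}(f_i)$ plus a sum of $\delta$-contributions at infinitely near points lying on the strict transform $\widehat{g_i}$ (which by Lemma~\ref{lema1b} passes only through the origin of the first chart and is smooth there after the reduction). The leading term $\delta_{0,\pi_{(p,q)}}(f_i)$, by~\eqref{eq-deltawpi} with $\nu_0(f_i)=qr_i$, equals the combinatorial number $\delta^{(p,q)}_{r_i}$ of~\eqref{eq-delta-comb} up to the planar $\delta$-contributions; and by Remark~\ref{condicionesclasico}, $\delta_0(g_i)=\K_0(g_i)$. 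Symmetrically, $\K_0(f_i)$ is computed by counting the conditions on a generic $h\in\O_P(s)$ to lie in ${\mathcal M}^{\nul}_{\cD_i}$; pulling back through the chart map~\eqref{eq-charts} and the monomial substitution, these conditions split into a ``lattice-point'' count — the number $A^{(p,q)}_{r_i}$ of Proposition~\ref{proppick}, coming from the monomials $x^iy^j$ with $pi+qj\le qr_i$, $i,j\ge1$ that fail to extend across the exceptional divisor — plus exactly the conditions measuring $\K_0(g_i)=\delta_0(g_i)$ coming from the singularity of $\widehat{g_i}$. Thus I would establish the two bookkeeping identities
\begin{equation*}
\delta_0(f_i)=\delta^{(p,q)}_{r_i}+\delta_0(g_i),\qquad
\K_0(f_i)=A^{(p,q)}_{r_i}+\K_0(g_i)=A^{(p,q)}_{r_i}+\delta_0(g_i).
\end{equation*}

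Subtracting, $\K_0(f_i)-\delta_0(f_i)=A^{(p,q)}_{r_i}-\delta^{(p,q)}_{r_i}=\Diff_P(k)$, which depends only on $k$ (equivalently on $r_i\bmod p$, by Proposition~\ref{proppick}(3)) and not on $i$. Hence $\K_0(f_1)-\K_0(f_2)=\delta_0(f_1)-\delta_0(f_2)$, as claimed; note this also identifies the common value with $\Diff_P(k)$, which is exactly what Theorem~\ref{thmDELTAJ} asserts. The main obstacle I anticipate is the second bookkeeping identity: one must carefully match, under the monomial map $[(x^e,y)]\mapsto[((x^p,x^qy),[1:y]_\w)]$, the logarithmic-pole and holomorphic-extension conditions defining ${\mathcal M}^{\nul}_{\cD}$ on $X(p;-1,q)$ with the corresponding classical conditions for $g_i$ on $\C^2$, and show that the ``extra'' conditions introduced by the blow-up are precisely the $A^{(p,q)}_{r_i}$ lattice points counted in Proposition~\ref{proppick} — in particular that the $1$-forms $\tilde\omega=h\,\frac{dx\wedge dy}{x}$ which \emph{a priori} might fail to be $\qa_d$-invariant (Remark~\ref{problema1forma}) contribute exactly the missing monomials and nothing more. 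The separation of the local contribution at the origin (the $\delta^{(p,q)}_r$ versus $A^{(p,q)}_r$ discrepancy) from the contributions at the remaining infinitely near points (where $f_i$ and $g_i$ agree and Remark~\ref{condicionesclasico} applies verbatim) is the heart of the argument.
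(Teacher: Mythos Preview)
Your overall strategy matches the paper's proof: use Lemma~\ref{lema1b} to relate $f_i$ on $X(p;-1,q)$ to a planar germ $g_i$ on $\C^2$, express both $\K_0$ and $\delta_0$ as a common term plus the combinatorial quantities $A^{(p,q)}_{r_i}$ and $\delta^{(p,q)}_{r_i}$, and invoke Proposition~\ref{proppick}(3). That is exactly what the paper does.

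However, your two ``bookkeeping identities'' are written in the wrong direction. Under Lemma~\ref{lema1b} the germ $g_i$ lives on $\C^2$ and $f_i$ is its \emph{strict transform} after the $(p,q)$-blow-up of $\C^2$ (the first chart of that blow-up being $X(p;-1,q)$), not the other way around. Consequently the recursive formula~\eqref{eq-delta} is applied to $g_i$, with $\nu_{p,q}(g_i)=qr_i$ and $\delta_{0,\pi_{(p,q)}}(g_i)=\delta^{(p,q)}_{r_i}$, yielding
\[
\delta_0(g_i)=\delta^{(p,q)}_{r_i}+\delta_0(f_i),
\]
and the pull-back computation~\eqref{eq-blowup-w} shows $\phi\in{\mathcal M}^{\nul}_{g_i}$ iff the induced $h\in{\mathcal M}^{\nul}_{f_i}$ \emph{and} $\nu_\phi\ge qr_i-p-q+1$, giving
\[
\K_0(g_i)=A^{(p,q)}_{r_i}+\K_0(f_i).
\]
Combined with $\K_0(g_i)=\delta_0(g_i)$ (Remark~\ref{condicionesclasico}) this yields $\K_0(f_i)-\delta_0(f_i)=\delta^{(p,q)}_{r_i}-A^{(p,q)}_{r_i}=-\Diff_P(k)$, the opposite sign from what you wrote. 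For Theorem~\ref{teoremadeltaext} itself this is harmless, since Proposition~\ref{proppick}(3) makes either sign depend only on $r_i\bmod p$; but your aside that the common value ``is exactly what Theorem~\ref{thmDELTAJ} asserts'' comes out with the wrong sign. Once you reverse the roles of $f_i$ and $g_i$ in your displayed identities (and drop the phrase ``strict transform $\widehat{g_i}$''), your argument becomes the paper's.
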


\begin{proof}
By Remark~\ref{weierstrass_remark} and the discussion after it, we can assume that 
\begin{equation*}
f_\ell(x,y) = y^{r_\ell} + \sum_{i> 0\leq j<r_\ell} a_{ij} x^i y^j \ \in \ \C\{x\}[y].
\end{equation*}
in $X(p;-1,q)$ ($p=d, q\equiv -ba^{-1} \mod d$).
Consider $g_1\in \C\{x,y\}$ the reduced germ obtained after applying Lemma~\ref{lema1b} to~$f_1$. 
Denote by $\pi_{(p,q)}$ the blowing-up at the origin. Note that $\nu_{p,q}(g_1)=qr_1$,
and thus $\delta^{(p,q)}_{r_1}=\delta_{\pi_{(p,q)}}(g_1)$ (see~\eqref{eq-delta-comb} and~\eqref{eq-deltawpi}).

Consider the form $\omega:=\phi\frac{dx\wedge dy}{g_1}$, $\phi\in\C\{x,y\}$ and let us calculate 
the local equations for the pull-back of $\omega$ after blowing-up the origin on $\C^2$,
\begin{equation}
\label{eq-blowup-w}
\phi\frac{dx\wedge dy}{g_1}\overset{\pi_{(p,q)}}{\longleftarrow} x^{\nu_{\phi}+p+q-1-qr_1}h\frac{dx\wedge dy}{f_1}.
\end{equation}

Using the definitions of ${\mathcal M}^{\nul}_{g_1}$ and ${\mathcal M}^{\nul}_{f_1}$ (see Definition~\ref{modulitos1})
this implies that
$$\phi\in {\mathcal M}^{\nul}_{g_1} \Leftrightarrow h \in {\mathcal M}^{\nul}_{f_1} \
\text{ and } \ \nu_{\phi}+p+q-1-qr_1\geq 0.$$
Therefore $\phi(x,y) \mapsto \phi(x^p,x^qy)$ induces an isomorphism
$$
{\mathcal M}^{\nul}_{g_1}\cong \mathcal A_{r_1}^{(p,q)}\cap {\mathcal M}^{\nul}_{f_1},
$$
where $\mathcal A_{r_1}^{(p,q)}:=\{h\in \C\{x,y\}\mid \ord_h+p+q-1-qr_1\geq 0\}$ and $\ord_h$ is the $(p,q)$-order of $h$.
Since $\dim_\C \frac{\C\{x,y\}}{\mathcal A_{r_1}^{(p,q)}}=A_{r_1}^{(p,q)}$, one obtains
\begin{equation}\label{eq-kgAkf}
 \K_0(g_1)=A_{r_1}^{(p,q)} + \K_0(f_1)
\end{equation}

On the other hand (see Remark~\ref{condicionesclasico} and Theorem~\ref{thm-delta}), 
\begin{equation}\label{eq-kgAkf2}
 \K_0(g_1)=\delta_0(g_1)=\delta_{\pi_{(p,q)}}(f)+\delta_0(f_1)=\delta^{(p,q)}_{r_1}+\delta_0(f_1).
\end{equation}
Therefore, from \eqref{eq-kgAkf} and \eqref{eq-kgAkf2}, 
\begin{equation}\label{etiqueta1}
 \K_0(f_1)=\delta_0(f_1)+\delta^{(p,q)}_{r_1}-A^{(p,q)}_{r_1}.
\end{equation}

Following a similar procedure we get,
\begin{equation}\label{etiqueta2}
\K_0(f_2)=\delta_0(f_2)+\delta^{(p,q)}_{r_2}-A^{(p,q)}_{r_2}.
\end{equation}
Notice that $k\equiv qr_1\equiv qr_2 \mod p$, which implies $r_1\equiv r_2 \mod p$ since $p$ and $q$ are coprime. Therefore
by Proposition~\ref{proppick} 
$$A^{(p,q)}_{r_1}-\delta^{(p,q)}_{r_1}=A^{(p,q)}_{r_{2}}-\delta^{(p,q)}_{r_{2}},$$
and finally from (\ref{etiqueta1}) and (\ref{etiqueta2}) it can be concluded that
$$
\K_0(f_1)-\K_0(f_2)= \delta_0(f_1)-\delta_0(f_2).
\vspace{-0.5cm}
$$ 
\end{proof}

\begin{proof}[{Proof of Theorem~\ref{thmDELTAJ}}]
The result follows directly from equation~\eqref{etiqueta1}.
\end{proof}

\begin{remark}\label{condicionesfuncion}
If $(f,[0])$ is a function germ on $X=X(d;a,b)$, from
Proposition~\ref{proppick}(1) and Theorem~\ref{thmDELTAJ}, one has
$$\K_P(f)=\delta_P(f).$$
In particular if $P$ is a smooth point of $X$, this generalizes
Remark~\ref{condicionesclasico}.
\end{remark}

\section{An introductory example}\label{secex}

Let us start this section with one basic illustrative example. Let us compute
the number of solutions $(a,b,c)\in\Z_{\geq 0}^3$ of the equation $$a\w_0+b\w_1+c\w_2=
k\bar{\w}$$ with $\w_0, \w_1, \w_2 \in \Z_{>0}$ and $k \in \Z_{\geq 0}$ fixed, or equivalently, the number
of monomials in $\O_{\P^2_{\w}}$ of quasi-homogeneous degree $k\bw$.
% (recall the notation used in \S\ref{secgen})
This number will be denoted by
$\Eh_\w(k\bar{\w})$ (recall \eqref{cardinal}). Notice that this is equivalent to
computing the number of non-negative integer solutions $(a,b,c)$ to
$a\w_0+b\w_1=(k\w_{01}-c)\w_2$ (with $w_{ij}:=w_iw_j$), which can be achieved by considering the following sets:
\begin{align*}
\tilde{A}:= & \left\{ (a,b)\in \Z_{>0}^2 \mid a\w_0+b\w_1=\alpha\w_2,\ \alpha=0,\ldots, k\w_{01}\right\}, \\
\tilde{B}:= & \left\{ (a,0)\in \Z_{>0}^2 \mid a\w_0=\alpha\w_2, \ \alpha=0,\ldots, k\w_{01} \right\} \cup \\
& \left\{ (0,b) \in \Z_{>0}^2 \mid b\w_1= \alpha \w_2, \ \alpha=0,\ldots, k\w_{01} \right\}.
\end{align*}
If we denote by $A= \#\tilde{A}$ and $B= \#\tilde{B}$, one has $\Eh_\w(k\bar{\w})=A+B+1$.
To compute $A$ take two integers $n_0,\ n_1 $ such that $n_0\w_0+n_1\w_1=1$ with $n_1>0$ and $n_0\leq 0$
(this can always be done since the weights are pairwise coprime). There exists a positive integer $\lambda$
satisfying $a = n_0 \alpha \w_2 + \lambda \w_1$ and $-\frac{n_0\alpha\w_2}{\w_1}<\lambda<\frac{n_1\alpha\w_2}{\w_0}$.
This justifies the following definition: (see Figure~\ref{numeropuntosbn})
$$
A_{\alpha}:=\#\left\{ \lambda \in \Z_{>0} \ |-\frac{n_0\alpha\w_2}{\w_1}<\lambda<\frac{n_1\alpha\w_2}{\w_0} \right\}.
$$

\begin{figure}
\begin{center}
 \includegraphics[scale=1]{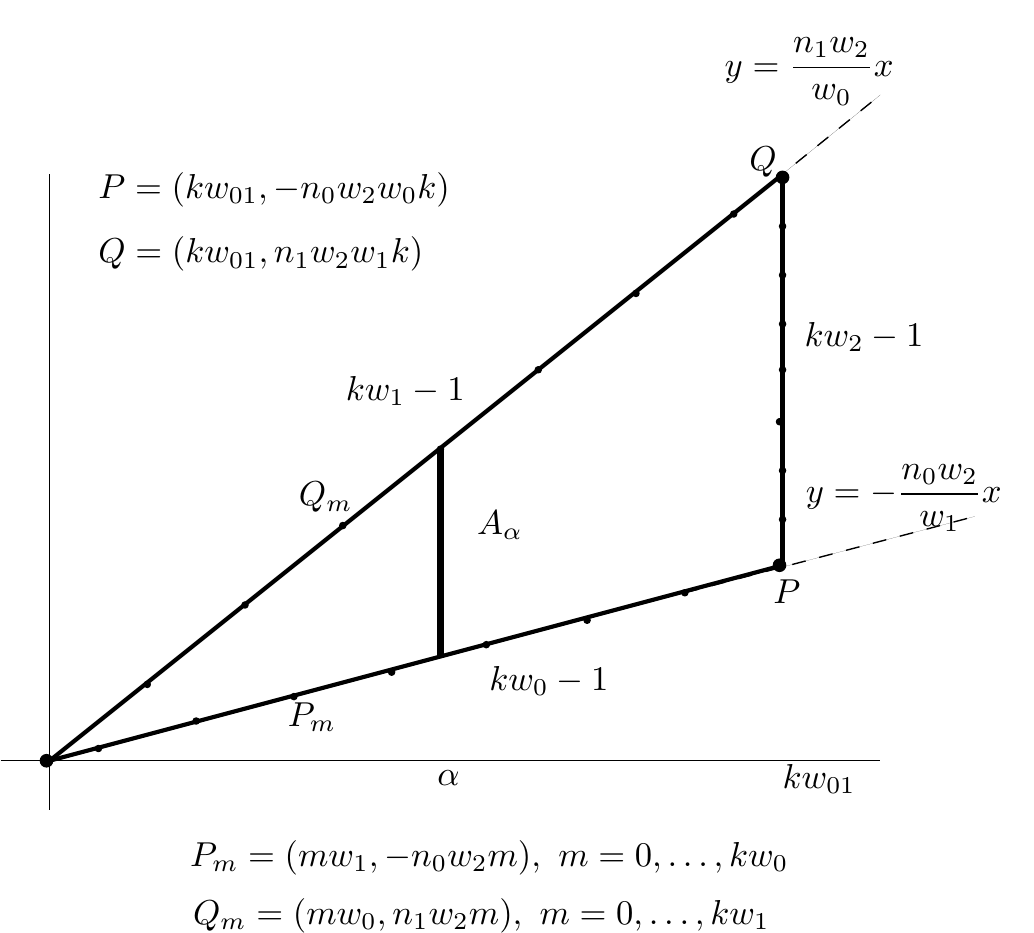}
\caption{}
\label{numeropuntosbn}
\end{center}
\end{figure}

Note that by virtue of Pick's theorem the area of the triangle is equal to the number of natural points in its interior $I$ plus one half the number of points in the boundary minus one. The area of the triangle equals $\frac{k^2\bar{\omega}}{2}$, thus 
$$\frac{k^2\bar{\omega}}{2}=I+\frac{k|\w|}{2}-1,$$
which implies $$A=I+(k\w_2-1)=\left(\frac{k^2\bar{\w}}{2}-\frac{k|\w|}{2}+1\right)+(k\w_2-1)=\frac{1}{2}(k^2\bar{\w}-k|\w|)+k\w_2.$$
It is easy to check that $B=k\w_0+k\w_1$, then we have
$$\Eh_\w(k\bar{\w})=\frac{1}{2}k\left(k\bar{\w}+|\w| \right)+1.$$
It is known that the genus of a smooth curve on $\P^2_{\w}$ of degree $d$
transversal w.r.t.~the axes is $$g_{\w,d}= \frac{d(d-|\w|)}{2\bar{\w}}+1.$$ We
want to find $d$ such that $\Eh_\w(k\bar{\w})=g_{\w,d}$. To do that it is enough
to solve the equation 
$$\frac{1}{2}k\left(k\bar{\w}+|\w| \right)+1=\frac{d(d-|\w|)}{2\bar{\w}}+1.$$
One finally gets that
$$\Eh_\w(k\bar{\w})=g_{\w,|w|+k\bar{\w}}.$$

The rest of this paper deals with the extension of this example when $d$ is not
necessarily a multiple of $\bar w$.

\section{Proof of the main results}\label{Goretti}

\begin{proof}[{Proof of Theorem~\ref{AdjFor}}]
We will prove the equivalent formula
$$
\Eh_\w(d-|\w|)=g_{\w,d}-\sum_{P\in\Sing(\P^2_w)}\Diff_P(d).
$$
From the definitions note that
$$
\Eh_\w(d-|\w|)=p_{\{w_0,w_1,w_2\}}(d-|\w|)=:p_{\{\w\}}(d-|\w|).
$$
Fix a point $P\in\Sing(\P^2_w)$ and describe for simplicity the local singularity as 
$X(w_i;w_{i+1},w_{i+2})=X(\w_i;-1,q_i)$, where $q_i:=-w_{i+1}^{-1}w_{i+2} \mod w_i$, 
for $i=0,1,2$ (indices are considered modulo 3). Define $r_i:=w_{i+2}^{-1}d \mod w_i$, then 
$$
A_{r_i}^{(w_i,q_i)}=p_{\{w_i,q_i,1\}}(q_ir_i-w_i-q_i).
$$
On the one hand from~\eqref{ecpw} and a direct computation one obtains
\begin{eqnarray*}
\Eh_\w(d-|w|)-g_{\w,d}&=&-1+\poly_{\{w\}}(0)+\sum_{i=0}^2 s_{|w|-d}(w_i,w_{i+1};w_{i+2}).
\end{eqnarray*}

By Definition~\ref{FDS} and Corollary~\ref{radrec} one obtains
\begin{equation}\label{Gor1}
\Eh_\w(d-|w|)-g_{\w,d}=\sum_{i=0}^2\left(s_{|w|-d}(w_{i+1},w_{i+2};w_i)-s_{0}(w_{i+1},w_{i+2};w_i) \right).
\end{equation}
On the other hand, from \eqref{ecpw1}, and straightforward computations one obtains
\begin{eqnarray}\label{Gor2}
\delta_{r_i}^{(w_i,q_i)}-A_{r_i}^{(w_i,q_i)}&=&\frac{w_i+q_i}{2w_iq_i}-\poly_{\{w_i,q_i,1\}}(0)\nonumber\\
&-&\left(s_{w_i+q_i-q_ir_i}(w_i,1;q_i)+s_{w_i+q_i-q_ir_i}(q_i,1;w_i)\right),
\end{eqnarray}
with
$$s_{w_i+q_i-q_ir_i}(q_i,1;w_i)=\frac{1}{w_i}\sum_{k=1}^{w_i-1}\frac{1}{(1-\xi_{w_i}^{kq_i})(1-\xi_{w_i}^{k})
\xi_{w_i}^{k(q_ir_i-w_i-q_i)}},$$
and 
\begin{eqnarray*}
s_{w_i+q_i-q_ir_i}(w_i,1;q_i)&=&\frac{1}{q_i}\sum_{k=1}^{q_i-1}\frac{1}{(1-\xi_{q_i}^{kw_i})(1-\xi_{q_i}^{k})
\xi_{q_i}^{k(q_ir_i-w_i-q_i)}}\\
&=&-\frac{1}{q_i}\sum_{k=1}^{q_i-1}\frac{1}{(1-\xi_{q_i}^{-kw_i})(1-\xi_{q_i}^{k})},
\end{eqnarray*}
which implies using Proposition~\ref{prop3}
\begin{equation}\label{eqaux}
 s_{w_i+q_i-q_ir_i}(w_i,1;q_i)= s_{w_i}(w_i,1;q_i)= s(-w_i,q_i)-\frac{q_i-1}{4q_i}.
\end{equation}

Since by hypothesis $q_i=-w_{i+1}^{-1}w_{i+2} \mod w_i$ and  $r_i=w_{i+2}^{-1}d \ \mod w_i$, one obtains
\begin{large}
\begin{equation}
\array{c}
s_{w_i+q_i-q_ir_i}(q_i,1;w_i)=\frac{1}{w_i}\sum_{\ell=1}^{w_i-1}\frac{1}{(1-\xi_{w_i}^{\ell q_i})(1-\xi_{w_i}^{\ell})\xi_{w_i}^{\ell(q_ir_i-w_i-q_i)}}\\
\\
=\frac{1}{w_i}\sum_{\ell=1}^{w_i-1}\frac{1}{(1-\xi_{w_i}^{\ell(-w_{i+1}^{-1}w_{i+2})})(1-\xi_{w_i}^{\ell})\xi_{w_i}^{\ell(-w_{i+1}^{-1}d+w_{i+1}^{-1}w_{i+2})}}\\
\\
\overset{\ell=-w_{i+1}\bar \ell}{=} \frac{1}{w_i}\sum_{\bar \ell=1}^{w_i-1}
\frac{1}{(1-\xi_{w_i}^{\bar \ell w_{i+2}})(1-\xi_{w_i}^{-\bar \ell w_{i+1}})\xi_{w_i}^{\bar \ell (d-w_{i+2})}}\\
\\
=-\frac{1}{w_i}\sum_{\bar \ell =1}^{w_i-1}
\frac{1}{(1-\xi_{w_i}^{\bar \ell w_{i+2}})(1-\xi_{w_i}^{\bar \ell w_{i+1}})\xi_{w_i}^{\bar \ell (d-|w|)}}
=-s_{|w|-d}(w_{i+1},w_{i+2};w_i).
\endarray
\end{equation}
\end{large}

Thus 
\begin{equation}\label{Gor3}
 s_{|w|-d}(w_{i+1},w_{i+2};w_i)=-s_{w_i+q_i-q_ir_i}(q_i,1;w_i),
\end{equation}
for $i=0,1,2$.

Using~\eqref{Gor1}, \eqref{Gor2}, and \eqref{Gor3}, it only remains to show
\begin{equation}
 \label{Gor5}
-s_{0}(w_{i+1},w_{i+2};w_i)=\frac{w_i+q_i}{2w_iq_i}-\poly_{\{w_i,1,q_i\}}(0)- s_{w_i+q_i-q_ir_i}(w_i,1;q_i).
\end{equation}
For the left-hand side we use Remark~\ref{remDed}(4) and obtain
$$s_{0}(w_{i+1},w_{i+2};w_i)=-s(-q_i,w_i)+\frac{w_i-1}{4w_i}=s(q_i,w_i)+\frac{w_i-1}{4w_i}.$$
For the right-hand side, using Corollary~\ref{radrec} and (\ref{eqaux}) we have,
\begin{equation*}
\poly_{\{w_i,1,q_i\}}(0) + s_{w_i+q_i-q_ir_i}(w_i,1;q_i) =
1-s_{0}(q_i,1;w_i)-s_{0}(w_i,1;q_i)+ s_{w_i}(w_i,1;q_i),
\end{equation*}
which, by Remark~\ref{remDed}(3) and  (\ref{eqaux}) becomes
\begin{equation*}
\left(1+s(q_i,w_i)-\frac{w_i-1}{4w_i}+s(w_i,q_i)-\frac{q_i-1}{4q_i}\right)+s(-w_i,q_i)-\frac{q_i-1}{4q_i}.
\end{equation*}
Combining these equalities into~\eqref{Gor5} one obtains the result.
\end{proof}

\begin{proof}[{Proof of Theorem~\ref{Adj-like-form}}]
It is enough to apply~\cite[Theorem 5.7]{CMO12}, Theorem~\ref{AdjFor} and 
recall the characterization of $\K_P(f)$ in the proof of
Theorem~\ref{teoremadeltaext} (see~\eqref{etiqueta1}).
\begin{align*}
&g(\CC)= g_{\w,d}-\sum_{P\in\Sing(\CC)}\delta_P(f)\\
&=\underbrace{g_{\w,d}+\sum_{i=0}^2\left(
\delta_{r_i}^{(w_i,q_i)}-A_{r_i}^{(w_i,q_i)}\right)}_{\Eh_\w(d-|w|)}-\underbrace{
\left(\sum_{P\in\Sing(\CC)}\delta_P(f)+\sum_{i=0}^2\left(
\delta_{r_i}^{(w_i,q_i)}-A_{r_i}^{(w_i,q_i)}\right)\right)}_{\sum _{P\in\Sing(\cC)} \K_P(f)}.
\end{align*} 
\end{proof}
\begin{remark}
The second equality in the previous identity always holds and therefore if one
considers $\cC\subset \PP^2_{\w}$ a reduced curve of degree $d$, then (recall Theorem~\ref{thmDELTAJ})
\begin{align*}
\Eh_\w(d-|\w|)&=g_{\w,d}-\sum_{P\in\Sing(\CC)} \delta_P(f)+\sum _{P\in\Sing (\cC)} \K_P(f).
\end{align*}
\end{remark}

Let us see an example of the previous result.
\begin{example}\label{exampadj}
Consider the polygon $\cD_\w:=\{(x,y,z)\in \RR^3\mid w_0x+w_1y+w_2z=1\}$, for $\w=(w_0,w_1,w_2)=(2,3,7)$.
As an example, we want to obtain the Ehrhart quasi-polynomial $\Eh_\w(d)$ for $\cD_\w$. Note that, according to
Theorem~\ref{AdjFor}
$$
\Eh_\w(d)=\frac{1}{84}d^2+\frac{1}{7}d+a_0(d),
$$
where $a_0(d)$ is a rational periodic number of period $\bar \w=42$. Moreover, 
$a_0(d)=1-\left( \sum_{i=0}^{2}\Delta_i(d+12) \right)$, where $\Delta_i$ has period $w_i$ and depends only on the 
singular point $P_i=\{x_j=x_k=0\}$ ($\{i,j,k\}=\{0,1,2\}$) in the weighted projective plane~$\PP^2_\w$.

In order to describe $\Delta_i(d)$ we will introduce some notation. Given a list of rational numbers 
$q_0,\dots,q_{r-1}$ we denote by $[q_0,\dots,q_{r-1}]$ the periodic function $f:\ZZ\to \QQ$ whose period 
is $r$ and such that $f(i)=[q_0,\dots,q_{r-1}]_i=q_i$. Using this notation it is easy to check that 
$$
\Delta_0(d)=\left[ 0,\frac{1}{4}\right]_d \quad \text{and}  \quad
\Delta_1(d)=\left[ 0,\frac{1}{3},\frac{1}{3}\right]_d.
$$
Finally, in order to obtain $\Delta_2(d)$, one needs to compute both $\delta$ and $K$-invariants for the 
singular point $P_2 \in X=X(7;2,3)$.
The following table can be obtained directly:

\begin{table}[h]
\caption{Local invariants at $X(7;2,3)$} % title name of the table
\centering 
\begin{tabular}{|c||c|c|c|c|c|c|c|}
%\begin{array}{|c|c|c|c|c|c|c|c|}
\hline
$d$ & 0 & 1 & 2 & 3 & 4 & 5 & 6 \\
\hline \hline
$\Delta_P$ & 0 & $\nicefrac{2}{7}$ & $\nicefrac{3}{7}$ & $\nicefrac{3}{7}$ & $\nicefrac{2}{7}$ & 0 & $\nicefrac{4}{7}$ \\
\hline
$\delta_P$ & 0 & $\nicefrac{9}{7}$ & $\nicefrac{3}{7}$ & $\nicefrac{3}{7}$ & $\nicefrac{9}{7}$ & 1 & $\nicefrac{4}{7}$ \\
\hline
$\K_P$ & 0 & 1 & 0 & 0 & 1 & 1 & 0 \\
\hline
\text{Branches} & 0 & 2 & 1 & 1 & 2 & 2 & 1 \\
\hline 
\raisebox{-.5ex}{\text{Equation}} & \raisebox{-.5ex}{1} & \raisebox{-.5ex}{$x(x^3+y^2)$} & 
\raisebox{-.5ex}{$x$} & \raisebox{-.5ex}{$y$} & \raisebox{-.5ex}{$x(x+y^3)$} & \raisebox{-.5ex}{$xy$} & 
\raisebox{-.5ex}{$x^3+y^2$} \\[0.5ex]
\hline
%\end{array}
\end{tabular}
\label{tab:inv}
\end{table}

The typical way to obtain the first row is by applying Theorem~\ref{Adj-like-form} to a generic germ $f_d$ in
$\mathcal O_X(d)$. This is how the second and third rows in the previous table were obtained. The last two rows
indicate the local equations and the number of branches of such a generic germ~$f_d\in \mathcal O_X(d)$.

Let us detail the computations for the third column in Table~\ref{tab:inv} (case $d=1$). 
One can write the generic germ $f_1$ in $\mathcal O_X(1)$ as $x(x^3+y^2)$. On the one hand,
a $(2,3)$-blow-up serves as a $\Q$-resolution of $X$, and thus, using Theorem~\ref{thm-delta}, 

$$\delta_P(f_1)=\dfrac{8(8-2-3+7)}{2\cdot7\cdot2\cdot3}+\dfrac{3-1}{2\cdot 3}=\dfrac{9}{7}.$$

For a computation of $\K_P$ one needs to study the quotient $\O_X(3)/\mathcal{M}^{\nul}_{f_1}$. 
Notice that in the present case 
$\O_{X}=\C\{x,y\}^{\qa_7}=\C\{x^7,y^7,x^2y\}$ and $\O_{X}(3)$ is the $\O_{X}$-module generated by $y$ and $x^5$. 
In order to study $\mathcal{M}^{\nul}_{f_1}$, consider a generic form 
$$(ay+bx^5)\dfrac{dx\wedge dy}{f_1}\in \Omega_X^2(\logres \langle f_1\rangle),$$ 
where $a,b\in \O_{X}$ and its pull-back by a resolution of the singularity $X(7;2,3)$. One obtains the following:
\vspace{0.2cm}

\begin{equation}\label{eq-ejt4}
\array{ll}
(ay+bx^5)\ \dfrac{dx \wedge dy}{x(x^3-y^2)} \leftblowup{x=u_1\bar v_1^2}{y=\bar v_1^3,\ {v}_1=
\bar v_1^7} & 3 \bar v_1^3(\tilde a+\tilde b u_1^5\bar v_1^7) 
\dfrac{ \bar v_1^{4}\ du_1 \wedge d\bar v_1}{\bar v_1^8u_1(u_1^3-1)} =\\
&\\
&\dfrac{3}{7}(\tilde a+\tilde bu_1^5v_1) \dfrac{du_1 \wedge dv_1}{v_1u_1(u_1^3-1)}.
\endarray
\end{equation}

Therefore $(ay+bx^5)\notin \mathcal{M}^{\nul}_{f_1}$ iff the function $a\in \O_X$ is a unit.
Hence, by Definition~\ref{condicionesnul},
$$\K_P(f_1)=\dim_{\C} \frac{\O_X(3)}{{\mathcal M}^{\nul}_{f_1}}=\dim_{\C} <y>_\C=1.$$

Finally, $$\Delta_P(1)=\delta_P(f_1)-\K_P(f_1)=\dfrac{2}{7}.$$

The rest of values in Table~\ref{tab:inv} can be computed analogously. Hence one obtains:
$$
\Delta_2(d):=\left[ 0, \frac{2}{7}, \frac{3}{7}, \frac{3}{7}, \frac{2}{7}, 0, \frac{4}{7} \right]_d,
$$
and thus
$$
\Eh_\w(d)=\frac{1}{84}d^2+\frac{1}{7}d+\left( 1-\left[ 0,\frac{1}{4}\right]_d - 
\left[ 0,\frac{1}{3},\frac{1}{3}\right]_d-
\left[0,\frac{4}{7},0,\frac{2}{7},\frac{3}{7},\frac{3}{7},\frac{2}{7}\right]_d\right).
$$
For instance, if one wants to obtain $\Eh_\w(54)$, note that 
$\left[ 0,\frac{1}{4}\right]_{54}=\left[ 0,\frac{1}{4}\right]_{0}=0$, 
$\left[ 0,\frac{1}{3},\frac{1}{3}\right]_{54}=\left[ 0,\frac{1}{3},\frac{1}{3}\right]_0=0$, and 
$\left[0,\frac{4}{7},0,\frac{2}{7},\frac{3}{7},\frac{3}{7},\frac{2}{7}\right]_{54}=
\left[0,\frac{4}{7},0,\frac{2}{7},\frac{3}{7},\frac{3}{7},\frac{2}{7}\right]_{5}=\frac{3}{7}$.
Thus
$$
\Eh_\w(54)=\frac{1}{84}54^2+\frac{1}{7}54+\left(1-\frac{3}{7}\right)=43.
$$
\end{example}

\bibliographystyle{plain}
%\bibliography{counting}
% \renewcommand{\bibname}{BIBLIOGRAPHY}
\def\cprime{$'$}

\end{document}